\documentclass[]{svmult}
\usepackage[utf8]{inputenc}
\usepackage{tikz}
\usepackage{tikz-cd}
\usetikzlibrary{patterns.meta}
\usepackage{newtxmath}
\usepackage{graphicx}
\usepackage{xcolor}
\usepackage{algpseudocode}
\usepackage[linesnumbered,lined,boxed,commentsnumbered]{algorithm2e}
\usepackage{mathtools}
\usepackage{enumitem}
\usepackage[normalem]{ulem}
\usepackage{subeqnarray}
\usepackage[framemethod=TikZ]{mdframed}

\usepackage[outdir=./]{epstopdf}

% Names
\newcommand{\Bezier}{B\'ezier}

% New math commands
\newcommand{\trunc}{\mathop{\rm trunc}\nolimits}
\newcommand{\overlap}{\mathop{\rm overlap}\nolimits}

\newcommand{\closure}{\mathop{\rm clos}\nolimits}

\newcommand{\argmin}{\mathop{\rm argmin}}

%% standard fields/projective space
\newcommand{\NN}{{\mathbb N}}

\newcommand{\ZZ}{{\mathbb Z}}
\newcommand{\RR}{{\mathbb R}}

\newcommand{\supp}{\mathop{\rm supp}\nolimits}

\newcommand{\lspan}{\mathop{\rm span}}
\newcommand{\act}{\mathop{\rm in}}
\newcommand{\nact}{\mathop{\rm ex}}

\newcommand{\iset}[3]{\mathcal{I}^{#1}_{#2}(#3)}
\newcommand{\piset}[3]{\widehat{\mathcal{I}}^{#1}_{#2}(#3)}
\newcommand{\eset}[3]{\mathcal{E}^{#1}_{#2}(#3)}
\newcommand{\peset}[3]{\widehat{\mathcal{E}}^{#1}_{#2}(#3)}

\newcommand{\dtrev}[2]{#2} % activate this instead of the above line to only show the new text
\long\def\comment#1{}

\spdefaulttheorem{assumption}{Assumption}{\bf}{}
\numberwithin{lemma}{section}
\numberwithin{proposition}{section}
\numberwithin{theorem}{section}
\numberwithin{corollary}{section}
\numberwithin{definition}{section}
\numberwithin{remark}{section}
\numberwithin{assumption}{section}

\newenvironment{assumpBox}
{
	\begin{mdframed}[
		outerlinewidth=0pt,
		roundcorner=0pt,
		innertopmargin=\baselineskip,
		innerbottommargin=\baselineskip,
		innerrightmargin=14pt,
		innerleftmargin=14pt,
		backgroundcolor=gray!14!white,
		fontcolor=black,
		align=center,
		skipabove=6pt,
		skipbelow=6pt]
	\begin{assumption}
}%
{% end code 
	\end{assumption}
	\end{mdframed}
}

\title{A characterization of linear independence of THB-splines in $\RR^n$ and application to \Bezier{} projection}
\titlerunning{\Bezier{} projection for THB-splines}
\author{K. Dijkstra and D. Toshniwal}
\institute{K. Dijkstra \at Delft Institute of Applied Mathematics, Delft University of Technology, Delft, The Netherlands, \email{k.w.dijkstra@tudelft.nl} \and
	D. Toshniwal\at Delft Institute of Applied Mathematics, Delft University of Technology, Delft, The Netherlands, \email{d.toshniwal@tudelft.nl}}
\date{January 2023}

\begin{document}
	
	\maketitle

 %    \include{ListOfSymbols}
 %    \tableofcontents
	% \newpage
 
	\begin{abstract}~
		In this paper we propose a local projector for truncated hierarchical B-splines (THB-splines). The local THB-spline projector is an adaptation of the \Bezier{} projector proposed by Thomas et al. (Comput Methods Appl Mech Eng 284, 2015) for B-splines and analysis-suitable T-splines (AS T-splines). For THB-splines, there are elements on which the restrictions of THB-splines are linearly dependent, contrary to B-splines and AS T-splines. Therefore, we cluster certain local mesh elements together such that the THB-splines with support over these clusters are linearly independent, and the \Bezier{} projector is adapted to use these clusters. We introduce general extensions for which optimal convergence is shown theoretically and numerically. In addition, a simple adaptive refinement scheme is introduced and compared to Giust et al. (Comput. Aided Geom. Des. 80, 2020), where we find that our simple approach shows promise.
	\end{abstract}

%	\tableofcontents
	
	\section{Introduction}
	% something about why higher degree splines are great
	In recent years, isogeometric analysis \cite{hughes_isogeometric_2005} has been an active topic of research in numerical mathematics. Using higher regularity finite dimensional spaces for the finite element method allows for better approximation power per degree of freedom \cite{Sande2020,beirao_da_veiga_estimates_2011}. Additionally, this allows the domain to be more accurately imported from Computer Aided Design software, which results in a reduction/elimination of domain meshing errors \cite{hughes_isogeometric_2005, Cottrell2009}. For this reason, B-splines are commonly used as basis functions. However, these B-splines cannot be refined locally. Different types of splines spaces have been introduced that can be locally refined, for example T-splines \cite{sederberg_t-splines_2003}, LR-splines \cite{Dokken2013PolynomialBox-partitions,johannessen_isogeometric_2014}, S-splines \cite{li_s-splines_2019} and HB-splines \cite{forsey_hierarchical_1988, rainer_kraft_adaptive_1998, vuong_hierarchical_2011}. These HB-splines have later been altered in \cite{giannelli_thb-splines_2012} to form the Truncated Hierarchical B-splines (THB-splines), a spline space that has the partition of unity property, and where the basis functions have smaller support than compared to the HB-splines. This short paper introduces a local projector from the standard Sobolev space $L^2(\Omega)$ onto the space of THB-splines.

	% usefullness of projectors (and applications)
	Such projectors are of great value for numerical analysis and scientific computing. For example, projectors like these can be used for curve and surface fitting, enforcement of boundary conditions, solution methods with non-conforming meshes, multi-level solver technologies and data compression for image, signal and data processing.
	
	% review of projectors for different kinds of (locally refined) splines. 
	Hierarchical spline fitting has been an active topic of research since the introduction of Hierarchical B-splines. Global fitting methods have been investigated in \cite{greiner_interpolating_1998} while quasi-interpolants have first been extended to hierarchical splines in \cite{rainer_kraft_adaptive_1998}. In \cite{speleers_effortless_2016} and \cite{speleers_hierarchical_2017}, efficient quasi-interpolation has been introduced for THB-splines, where the latter sets up a general framework to construct these from a Sobolev space $W_q^{p+1},1\leq q\leq \infty$. However, these works only explicitly construct interpolates from the continuous space $C(\Omega)$ onto the THB-spline space. More recently, a different local THB-spline projector has been introduced in \cite{giust_local_2020} that requires less function evaluations per degree of freedom then the projector of \cite{speleers_effortless_2016} while being slightly more accurate.
	
	% our focus, THB-splines as bezier projections
	We introduce a \Bezier{} projector for THB-splines. This projector is an extension of the \Bezier{} projector introduced by \cite{Thomas2015} for B-splines and analysis-suitable T-splines. This local projector consists of two steps. For all mesh elements, an initial local $L^2$ projection onto the local polynomial space on that element. Secondly, these $C^{-1}$ projections are smoothed to produce a global spline projection. In this projection, local linear independence of the splines is required for every mesh element. Unfortunately, THB-splines are not linearly independent on each mesh element. In this paper, \dtrev{we introduce assumptions for the mesh, such as grading of the THB-splines, that produce collections of mesh elements. We proof}{under suitable assumptions on the mesh such as graded refinements, we construct local collections of mesh elements, called projection elements, such} that the THB-splines are linearly independent over them. These collections are local in the sense that they consist of adjacent mesh elements, where the number of mesh elements included in a projection element is only dependent on the spline degree.
	To be able to talk about pseudo-local linear independence of functions on collections of mesh elements, we introduce the following definition of overloading.
	\begin{definition}\label{def:overloading}
		For a given space, $\mathbb{S}$, of functions defined on $\Omega$, we say that $\widetilde{\Omega}\subset\Omega$ is overloaded if the functions supported on it are linearly dependent; else, $\widetilde{\Omega}$ is not overloaded.
	\end{definition}
	
	That is, the projection elements that we will introduce will not be overloaded for THB-splines, thus allowing the formulation of a \Bezier{} projection.
	See Figure \ref{fig:MainProjectionStepFigure} for an example application of the projector.
    Various assumptions are required to construct these projection elements, some of which appear in literature, e.g. mesh grading \cite{buffa_adaptive_2016}, while we also introduce two new assumptions. 
    One of these assumptions is non-constructive in the sense that if a mesh violates it, it is unclear which mesh modifications should be performed to satisfy it. Therefore, we also provide a set of stronger and constructive assumptions in two dimensions for quadratic and cubic THB-splines. They are used to formulate a first adaptive refinement scheme.
	
	\begin{figure}[t]
		\centering
		\includegraphics[]{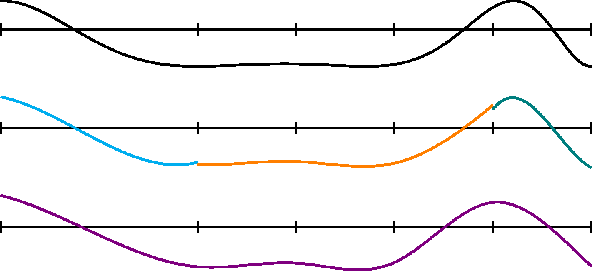}
		\caption{An initial projection on to local collections of mesh elements, and a global smoothing step to produce a global projection of the target onto a THB-spline space.}
		\label{fig:MainProjectionStepFigure}
	\end{figure}
	
	Section \ref{sec:Prelim} recaps the \Bezier{} projector from \cite{Thomas2015} for multivariate B-splines. In Section \ref{sec:THBSplines} we introduce non-overloaded macro-elements that are used in Section \ref{sec:THBProj} to formulate the \Bezier{} projector for THB-splines. In Section \ref{sec:NumRes} the local error estimates are validated, and the adaptive refinement routine is compared to \cite{giust_local_2020}.
	
	\section{B-splines and \Bezier{} projection}
	In this section, B-splines will be briefly introduced and a small recap of \cite{Thomas2015} will be given on \Bezier{} projection for multivariate B-splines.
	For an introductory text on B-splines, we refer the reader to \cite{Lyche2018}.
	
	\label{sec:Prelim}
	\subsection{Univariate B-splines}
	\label{sec:univariate-b-spline}
	Consider the unit domain $\Omega = (0,1)$, a polynomial degree $p\in \mathbb{N}_0$, and an increasing sequence of $m\in \mathbb{N}$ real numbers over $\Omega$:
	\begin{eqnarray}
		\label{eq:knot-sequence}
		\nonumber \boldsymbol{\xi} &:=& \{\xi_1 \leq \xi_2 \leq \dots \leq \xi_m\}\;,\\
		&=& \{\underbrace{0, \dots, 0}_{p+1~\text{times}} < \xi_{p+2} < \dots < \xi_{m-p-1} < \underbrace{1, \dots, 1}_{p+1~\text{times}}\}\;.
	\end{eqnarray}
	This sequence is called a knot sequence, where every individual value is called a knot. This knot sequence can be seen as a partition of $\Omega$, where the unique knots indicate element boundaries for the partition. For our purposes, we will only allow the first and last knots to repeat $p+1$ times, and the other knots are all unique. 
	By convention, we will consider all non-empty open knot spans $(\xi_{k},\xi_{k+1})$ to be the mesh elements.

	For the knot sequence $\boldsymbol{\xi}$, we can define $M := m-p-1$ B-splines recursively,
	\begin{equation}
		B_{j,p,\boldsymbol{\xi}}(x) := \frac{x - \xi_j}{\xi_{j+p} - \xi_j}B_{j,p-1,\boldsymbol{\xi}} + \frac{\xi_{j+p+1}-x}{\xi_{j+p+1}-\xi_{j+1}}B_{j+1,p-1,\boldsymbol{\xi}}\;,~ j=1,\dots,M\;.
	\end{equation}
	Here we assume the fractions to be zero when the denominator is zero. The base case $p=0$ is defined as a unit function over half-open knot intervals:
	\begin{equation}
		B_{j,0,\boldsymbol{\xi}} := \begin{cases} 1& \text{if } x\in [\xi_j,\xi_{j+1})\;, \\ 0 & \text{else}\;.\end{cases}
	\end{equation}
	
	The functions $B_{j,p,\boldsymbol{\xi}}$ are non-negative, form a partition of unity, and the collection of B-splines with support on any given mesh element $\Omega^e = (\xi_k,\xi_{k+1})$ is linearly independent and can reproduce any polynomial of degree $p$ on $\Omega^e$.
	That is, w.r.t. B-splines, none of the mesh elements are overloaded. The span of all $M$ B-splines is called the B-spline space and is denoted as:
	\begin{eqnarray}
		\mathcal{B}_{p,\boldsymbol{\xi}} &:=& \{B_{j,p,\boldsymbol{\xi}}\}_{j=1}^M\;,\\
		\mathbb{B}_{p,\boldsymbol{\xi}} &:=& \lspan \left\{\mathcal{B}_{p,\boldsymbol{\xi}}\right\}\;.
	\end{eqnarray}
	With the spline degree  $p$ fixed, we will omit it from the notation to simply write $B_{j,\boldsymbol{\xi}}$, $\mathcal{B}_{\boldsymbol{\xi}}$ and  $\mathbb{B}_{\boldsymbol{\xi}}$.
	
	\subsection{Multivariate B-splines}
	In the case of a multivariate domain $\Omega = (0,1)^n$, where $n$ denotes the dimension. Let the vector $\vec{p} := (p^1,\dots,p^n)$ denote the polynomial degrees per dimension and let $\Xi = (\boldsymbol{\xi}^1,\dots,\boldsymbol{\xi}^n)$ collect the knot sequences in each dimension.
	In this setting, the multivariate B-spline space $\mathbb{B}_{\vec{p},\Xi}$ is defined as the span of tensor product B-splines over knot sequences $\boldsymbol{\xi}^i$, $i=1,\dots,n$.
	That is, the B-splines $B_{\vec{j},\vec{p},\Xi}$ that span $\mathbb{B}_{\vec{p},\Xi}$ are given by:
	\begin{eqnarray}
		B_{\vec{j},\vec{p},\Xi}(x^1,\dots,x^n) := B_{j^1,p^1,\boldsymbol{\xi}^1}(x^1)\times\dots\times B_{j^n,p^n,\boldsymbol{\xi}^n}(x^n)\;.
	\end{eqnarray}
	Just like in the univariate case, we will assume that the degree $\vec{p}$ is fixed and drop it from all notation to simply denote the B-splines as $B_{\vec{j},\Xi}$, the set of all B-splines as $\mathcal{B}_{\Xi}$, and the spline space as $\mathbb{B}_{\Xi}$.
	
	Here, we also introduce some notation for the mesh corresponding to $\Xi$.
	All $\vec{k}$ such that $p^i+1 \leq k^i \leq m^i - p^i - 1$, define a mesh element $\Omega^e = \bigtimes_{i=1}^n(\xi^i_{k^i},\xi^i_{k^i+1})$. We will index all of these mesh elements in a linear way and store the mesh elements in the set $e\in\mathcal{M}_\Xi$.
	
	\subsection{\Bezier{} projection for B-splines}
	In this section we introduce the \Bezier{} projection for B-splines from \cite{Thomas2015}, which is a local approach to project from $L^2(\Omega)$ onto the space of B-splines, $\mathbb{B}_{\Xi}$.
	There are two main steps in this approach, see Figure \ref{fig:MainProjectionStepBsplines}.
	
	\begin{figure}[t]
        \sidecaption[l]
		% \centering
		\includegraphics[width=0.6\textwidth]{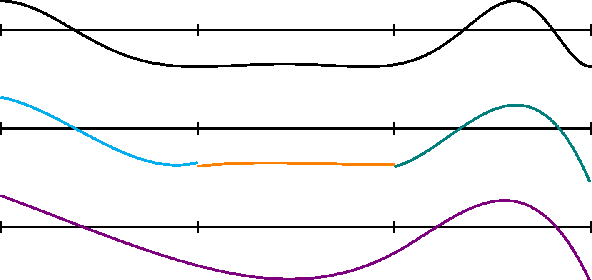}
		\caption{The two main steps of Univariate \Bezier{} B-spline projection. The Target function is initially projected on to a local polynomial basis of degree $p$, on every element. These discontinuous polynomial functions are smoothed to obtain a global projection in the B-spline space $\mathbb{B}_{\boldsymbol{\xi}}$.}
		\label{fig:MainProjectionStepBsplines}
	\end{figure}
	
	Given the target function $f \in L^2(\Omega)$. The first step is to project the target function on to the space of discontinuous polynomials of degree $\vec{p}$.
    In the second step, this piecewise-defined $C^{-1}$ approximation is smoothed out to construct a spline from $\mathbb{B}_{\Xi}$ that approximates $f$.
	
	For the discontinuous polynomial space on element $\Omega^e = \times_{i=1}^n \left(\xi_{k^i}^i,\xi_{k^i+1}^i\right)$, we consider $\boldsymbol{B}_{\Xi^e}$ where $\Xi^e = (\boldsymbol{\xi}^{1,e}, \dots, \boldsymbol{\xi}^{n,e})$ and
	\begin{equation}
		\boldsymbol{\xi}^{i,e} := \big\{
		\underbrace{\xi_{k^i}^i, \dots, \xi_{k^i}^i}_{p^i+1~\text{times}},~\underbrace{\xi_{k^i+1}^i, \dots, \xi_{k^i+1}^i}_{p^i+1~\text{times}}
		\big\},~i=1,\dots,n\;.
	\end{equation}
    This choice is justified as $\mathcal{B}_{\Xi^e}$ spans the polynomial space $\mathbb{P}_{\vec{p}}(\Omega^e)$. The initial $L^2$ projection of $f|_{{\Omega}^e}$ onto the space $\mathbb{P}_{\vec{p}}(\Omega^e)$ gives us
    \begin{eqnarray}\label{eq:UnivariatePolynomialProjection}
        \Pi^e f := \sum_{j^1 = 1}^{p^1+1}\cdots\sum_{j^n = 1}^{p^n+1} a_{\vec{j}}^e B_{\vec{j},\Xi^e}\;,
    \end{eqnarray}
    where $\Pi^e f 
    %\in\mathbb{P}_{\vec{p}}(\Omega^e)
    $ is the solution to the minimization problem $\Vert f - \Pi^e f \Vert_{L^2(\Omega^e)}$.
	Let $\iset{e}{}{\mathcal{B}_{\Xi}}$ denote the set of indices of B-splines supported on $\Omega^e$,
	\begin{equation}
		\iset{e}{}{\mathcal{B}_{\Xi}} := 
		\left\{~
		\vec{j}~:~B_{\vec{j},\Xi}\in \mathcal{B}_{\Xi} ,~\supp(B_{\vec{j},\Xi}) \cap \Omega^e \neq \emptyset~
		\right\}\;,
	\end{equation}
    The target B-spline space $\mathbb{B}_{\Xi}$, restricted to element $\Omega^e$, also spans the local polynomial space $\mathbb{P}_{\vec{p}}(\Omega^e)$. Hence, there exists a unique invertible matrix $\mathrm{C}^e$ such that $\vec{b^e} = \mathrm{C}^e \vec{a}^e$ and:
    \begin{eqnarray}\label{eq:bezier-projection-bspline-form}
		\Pi^e f & =& \sum_{\vec{j} \in \iset{e}{}{\mathcal{B}_{\Xi}}} b_{\vec{j}}^e B_{\vec{j},\Xi}\big|_{\Omega^e}\;.
	\end{eqnarray}
	The matrix $\mathrm{C}^e$ is related to the notions of knot insertion \cite{de_boor_calculating_1972} and \Bezier{} extractions \cite{scott_isogeometric_2011}. In particular, it is the inverse of the \Bezier{} extraction operator for the given element.
	
	Performing the above local projections for each element, we get piecewise-polynomial descriptions of the form \eqref{eq:bezier-projection-bspline-form} on each mesh element.
	However, if $\vec{j} \in \iset{e}{}{\mathcal{B}_{\Xi}} \cap \iset{e'}{}{\mathcal{B}_{\Xi}}$, then in general $b^{e}_{\vec{j}} \neq b^{e'}_{\vec{j}}$. See Figure \ref{fig:MainProjectionStepBsplines}.
	Consequently, the piecewise descriptions together do not define a spline in $\mathbb{B}_{\Xi}$.
	This is rectified by a smoothing operation that, for a given B-spline $B_{\vec{j},\Xi}$, takes all associated coefficients $b^e_{\vec{j}}$ and performs a weighted averaging to yield a single coefficient $b_{\vec{j}}$,
	\begin{eqnarray}
		\label{eq:univariate-b-spline-global-smoothing}
		b_{\vec{j}} &:=& \sum_{e \in \eset{}{\vec{j}}{\mathcal{B}_{\Xi}}} \omega_{\vec{j}}^{e} b^{e}_{\vec{j}}\;,
	\end{eqnarray}
	where $\omega_{\vec{j}}^{e}$ are the averaging weights and $\eset{}{\vec{j}}{\mathcal{B}_{\Xi}}$ is the set of all elements where $B_{\vec{j},\Xi}$ is supported,
	\begin{equation}
		\eset{}{\vec{j}}{\mathcal{B}_{\Xi}} := 
		\left\{
		e \in \mathcal{M}_{\Xi}~:~B_{\vec{j},\Xi}\in\mathcal{B}_{\Xi},~\Omega^e \cap (B_{\vec{j},\Xi})  \neq \emptyset
		\right\}\;.
	\end{equation}
	They can be defined in different ways, we follow the recommendations of \cite{Thomas2015} and choose them as
	\begin{eqnarray}
		\omega_{\vec{j}}^e := \frac{\int_{\Omega^e}B_{\vec{j},\Xi}\;dx}{\int_\Omega B_{\vec{j},\Xi}\;dx}\;.
	\end{eqnarray}
	Observe that $\sum_{e \in \eset{}{\vec{j}}{\mathcal{B}_{\Xi}}} \omega_{\vec{j}}^e = 1$.
	Finally, the global \Bezier{} projection operator $\Pi:L^2(\Omega) \rightarrow \mathbb{B}_{\Xi}$ is defined by
	\begin{equation}
		\Pi f := %\sum_{j^1=1}^{M^1}\dots\sum_{j^n=1}^{M^n} 
                \sum_{\mathclap{\vec{j}:B_{\vec{j},\Xi}\in\mathcal{B}_{\Xi}}}b_{\vec{j}} B_{\vec{j},\Xi}\;.
	\end{equation}

	\section{Pseudo-local linear independence of THB-splines}\label{sec:THBSplines}
	THB-splines, are a way to build locally refined spline spaces. They are defined in terms of a sequence of nested multivariate B-spline spaces, and a hierarchy of locally refined domains. However, in contrast to the B-spline spaces, certain mesh elements will be overloaded w.r.t. THB-splines (see Definition \ref{def:overloading}). In this section, under suitable assumptions on the mesh, we identify the non-overloaded elements of the mesh and use them to partition the mesh into non-overloaded macro-elements.
	
	\subsection{Construction of THB-splines}
	Consider a nested sequence of multivariate B-spline spaces over domain $\Omega = (0,1)^n$:
	\begin{eqnarray}
		\label{eq:NestedBsplineSpaces}
		\mathbb{B}_{1} \subset \mathbb{B}_{2} \subset \dots \subset \mathbb{B}_{L}\;.
	\end{eqnarray}
	Here the multivariate B-spline space $\mathbb{B}_\ell := \mathbb{B}_{\Xi_\ell} $ containing the B-splines $B_{\vec{j},\ell} := B_{\vec{j}}\in \mathbb{B}_{\Xi_\ell}$, is a level-$\ell$ tensor-product B-spline defined over the knot sequences $\boldsymbol{\xi}^i_\ell$, $i=1,\dots,n$ that define $\Xi_\ell$.
	As before, these knot sequences $\boldsymbol{\xi}^i_\ell$ are all $(p+1)$-open knot sequences, where the interior knots are unique.
	In this setting, nestedness of the B-spline spaces is ensured if and only if:
	\begin{eqnarray}
		\boldsymbol{\xi}_1^i \subset \boldsymbol{\xi}_2^i \subset \dots \subset \boldsymbol{\xi}_L^i\;,\qquad i = 1,\dots,n\;.
	\end{eqnarray}

	We will denote the sets containing level-$\ell$ mesh element indices as $\mathcal{M}_\ell := \mathcal{M}_{\Xi_\ell}$.
	Next, consider a sequence of nested, closed subsets of $\Omega$:
	\begin{eqnarray}
		\Omega_L \subseteq \dots \subseteq \Omega_1 := \overline{\Omega}\;,
	\end{eqnarray}
	where $\Omega_\ell$ is the closure of the union of mesh elements $\Omega^e$ for some $e \in \mathcal{M}_\ell$. The collection of those subsets is denoted by:
	\begin{eqnarray}
		\boldsymbol{\Omega} := \{\Omega_1,\dots,\Omega_L\}\;,
	\end{eqnarray}
	and will be referred to as the domain hierarchy on $\Omega$.
	For a given level $\ell$, we can use this hierarchy to split the B-spline basis functions $\mathcal{B}_\ell$:
	\begin{eqnarray}
		\mathcal{B}_\ell^{\act} &:=& \{B_{\vec{i},\ell} \in \mathcal{B}_\ell ~:~ \supp(B_{\vec{i},\ell}) \subseteq \Omega_\ell\}\;,\\
  \mathcal{B}_\ell^{\nact} &:=& \{B_{\vec{i},\ell} \in \mathcal{B}_\ell ~:~ \supp(B_{\vec{i},\ell}) \nsubseteq \Omega_\ell\}\;.
	\end{eqnarray}
	Using the above, the following defines Hierarchical B-spline basis functions or HB-spline basis functions.
	
	\begin{definition}
		\label{def:HBsplines}
		Given a domain hierarchy $\boldsymbol{\Omega}$, the corresponding set of HB-spline basis functions is denoted by $\mathcal{H}_{\boldsymbol{\Omega}}$ and defined recursively as follows:
		\begin{enumerate}[leftmargin=0.28in]
			\item $\mathcal{H}_1 := \mathcal{B}_1$ ,
			\item for $\ell = 2,\dots,L$ :
			\[\mathcal{H}_\ell := \mathcal{H}_\ell^C \cup \mathcal{H}_\ell^F\;,\]
			where
			\begin{eqnarray*}
				\mathcal{H}_\ell^C &:=& \left\{ B_{\vec{j},k} \in \mathcal{H}_{\ell-1} : \supp(B_{\vec{j},k}) \nsubseteq \Omega_\ell \right\}\;,\\
				\mathcal{H}_\ell^F &:=& \mathcal{B}_\ell^{\act}\;,
			\end{eqnarray*}
			\item $\mathcal{H}_{\boldsymbol{\Omega}} := \mathcal{H}_L$ .
		\end{enumerate}
	\end{definition}

	See the top of plot of Figure \ref{fig:HB-THB-comparison} for an example. HB-spline basis functions lack the partition of unity property, which is shown in red in Figure \ref{fig:HB-THB-comparison}. In addition, the number of overlapping basis functions associated with different hierarchical levels increases easily. This motivates the construction of a different basis, i.e., the THB-spline basis, based on the following truncation mechanism.
	
	\begin{definition}
		Given $\ell=2,\dots,L$. Let $f\in \mathbb{B}_{\ell-1}$ be represented in the B-spline basis $\mathcal{B}_{\ell}$:
		\begin{eqnarray}
			\label{eq:defTruncationSum}
			 % f = \sum_{j^1 = 1}^{M_\ell^1}\cdots \sum_{j^n=1}^{M_\ell^n} c_{\vec{j},\ell}B_{\vec{j},\ell}.\\
            f = \sum_{\mathclap{\vec{j}:B_{\vec{j},\ell}\in\mathcal{B}_\ell}} c_{\vec{j},\ell}B_{\vec{j},\ell}\;.
		\end{eqnarray}
		The truncation of $f$ at hierarchical level $\ell$ is defined as the the sum of the terms appearing in \eqref{eq:defTruncationSum} corresponding to the B-splines in $\mathcal{B}_\ell^{\nact}$ :
		\begin{eqnarray}
			\label{eq:def-trunc-operator}
			\trunc_\ell(f) := \sum_{\mathclap{\vec{j}:B_{\vec{j},\ell}\in \mathcal{B}_\ell^{\nact}}} {c_{\vec{j},\ell}B_{\vec{j},\ell}}\;.
		\end{eqnarray}
	\end{definition}
	By successively truncating the functions constructed in Definition \ref{def:HBsplines}, THB-spline basis functions are constructed.
	\begin{definition}
		\label{def:THBsplines}
		Given a domain hierarchy $\boldsymbol{\Omega}$, the corresponding set of THB-splines basis is denoted by $\mathcal{T}_{\boldsymbol{\Omega}}$ and defined recursively as follows:
		\begin{enumerate}[leftmargin=0.28in]
			\item $\mathcal{T}_1 := \mathcal{B}_1$ ,
			\item for $\ell = 2,\dots,L$:
			\[\mathcal{T}_\ell := \mathcal{T}_l^C \cup \mathcal{T}_\ell^F\;,\]
			where
			\begin{eqnarray*}
				\mathcal{T}_\ell^C &:=& \left\{ \trunc_\ell(T)~:~T \in \mathcal{T}_{\ell-1}\;,\;\supp(T) \nsubseteq \Omega_\ell \right\}\;,\\
				\mathcal{T}_\ell^F &:=& \mathcal{B}_\ell^{\act}\;,
			\end{eqnarray*}
			\item $\mathcal{T}_{\boldsymbol{\Omega}} := \mathcal{T}_L$ .
		\end{enumerate}
	\end{definition}

	The $N := |\mathcal{T}_{\boldsymbol{\Omega}}|$ THB-spline basis functions are linearly indexed by some ordering $i = 1,\dots,N$:
	\begin{eqnarray}
		\mathcal{T}_{\boldsymbol{\Omega}} = \{T_i\}_{i=1}^N\;.
	\end{eqnarray}
	The space of THB-splines will be denoted as $\mathbb{T}_{\boldsymbol{\Omega}}$.
	Finally, we define the set of active level-$\ell$ mesh elements $\mathcal{M}_\ell^{\act}$ as
	\begin{equation}
		\mathcal{M}_\ell^{\act} :=
		\left\{
		e \in \mathcal{M}_\ell~:~\Omega^e \subset \Omega_\ell\text{~and~}\Omega^e \cap \Omega_{\ell+1}= \emptyset
		\right\}\;.
	\end{equation}
	In Figure \ref{fig:HB-THB-comparison} a comparison between THB-splines and HB-splines is given.

	\begin{figure}
		\centering
		\includegraphics{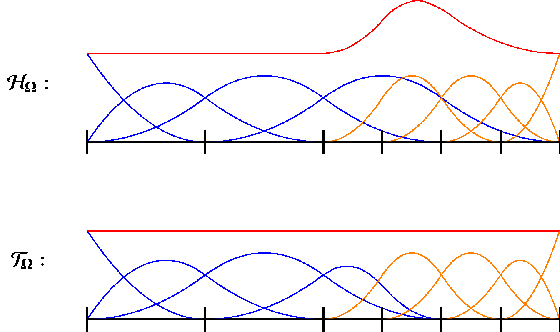}
		\caption{An example of an HB-spline basis (top) and a corresponding THB-spline basis (bottom). The blue splines are from the first level and the orange splines are from the second one. Notice that the total sum of all splines (red line) is 1 for the THB-spline basis.}
		\label{fig:HB-THB-comparison}
	\end{figure}

	\subsection{Some assumption on the hierarchical meshes}
	In order to simplify the developments, we will place the following basic assumptions on our THB-splines for the rest of this document. Additional assumptions will be introduced when necessary.
 
    \begin{assumpBox}
    \label{ass:bisection}
    The mesh sizes at level $\ell = 1$ satisfies quasi-uniformity.
    Moreover, for $\ell > 1$, the level-$\ell$ knot sequence in each direction is obtained by bisecting each non-zero knot span of the corresponding level-$(\ell-1)$ knot sequence.
	\end{assumpBox}
    \begin{assumpBox}
		\label{ass:MeshGrading}
		Given a mesh element $\Omega^e$, $e \in \mathcal{M}^{\act}_\ell$, the only THB-splines that can be supported on $\Omega^e$ correspond to (truncated) B-splines from at most two levels: $\ell-1$ and $\ell$.
	\end{assumpBox}
    
	\subsection{An example of overloaded mesh elements in $\RR^1$}\label{sec:proj-elements}
	
	In contrast to B-splines, mesh elements can be overloaded even for univariate THB-splines. In Figure \ref{fig:overloading-example-1D} an example of an overloaded element is depicted in blue for a quadratic THB-spline space.
	
	\begin{figure}[t]
		\centering
		\includegraphics[]{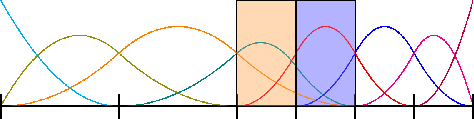}
		\caption{The basis functions of a quadratic THB-spline space consisting of two levels are shown. While the blue coloured element is overloaded, the combination of the blue and orange is not.}
		\label{fig:overloading-example-1D}
	\end{figure}
	
	This univariate example motivates the following observations, which drive the developments in the next sections in the general multivariate setting.
	\begin{itemize}
		\item As a result of the truncation mechanism, at most $p$ elements that are adjacent to the border of $\Omega_\ell$ can be overloaded.
		\item The undesirable overloading on the blue element can be resolved by creating a macro-element that combines the blue and the orange elements.
	\end{itemize}
	
	Then, in the next section, we will characterize the non-overloaded elements for THB-splines and use them to create macro-elements, called projection elements, that are not overloaded.
	
	\subsection{Characterizing non-overloaded mesh elements in $\RR^n$}
	
	\begin{definition}
		Given $\Omega^e = \bigtimes_{i=1}^n \left(\xi_{k^i,\ell}^i~,~ \xi^i_{k^i+1,\ell}\right)$, $e \in \mathcal{M}_\ell$, the following boundary facets,
		\begin{equation}
			\begin{split}
				&\bigtimes_{i=1}^{l-1} \left[\xi_{k^i,\ell}^i~,~ \xi^i_{k^i+1,\ell}\right]
				\bigtimes \left\{\xi_{k^l,\ell}^l\right\}
				\bigtimes_{i=l+1}^{d} \left[\xi_{k^i,\ell}^i~,~ \xi^i_{k^i+1,\ell}\right]\;,\\
				&\bigtimes_{i=1}^{l-1} \left[\xi_{k^i,\ell}^i~,~ \xi^i_{k^i+1,\ell}\right]
				\bigtimes \left\{\xi_{k^l+1,\ell}^l\right\}
				\bigtimes_{i=l+1}^{d} \left[\xi_{k^i,\ell}^i~,~ \xi^i_{k^i+1,\ell}\right]\;,\\
			\end{split}
		\end{equation}
		are called its $l$-normal facets.
	\end{definition}

	\begin{definition}
		Given $\Omega^e = \bigtimes_{i=1}^n \left(\xi_{k^i,\ell}^i~,~ \xi^i_{k^i+1,\ell}\right)$, $e \in \mathcal{M}_\ell$, its $\vec{t} = (t^1,\dots,t^n)$ translation is defined to be the element
		$\tau_{\vec{t}}(\Omega^e) := \bigtimes_{i=1}^n \left(\xi_{k^i+t^i,\ell}^i~,~\xi_{k^i+t^i+1,\ell}^i\right)$.
	\end{definition}

	\begin{definition}
		Given $e \in \mathcal{M}^{\act}_\ell$ and $1 \leq i \leq n$, $d^{i,e}_\ell \in \ZZ_{\geq 0}$ is defined to be the smallest number so that one of the following translations $\vec{t}$ of $\Omega^e$,
		\begin{equation}
			\bigg(
			0\;,\;\dots\;,\;0\;,\;\;\;\mathclap{\underbrace{d^{i,e}_\ell}_{i\text{-th position}}}\;\;,\;0\;,\;\dots\;,\;0
			\bigg)
			~~\text{or}~~
			\bigg(
			0\;,\;\dots\;,\;0\;,\;\;\;\;\mathclap{\underbrace{-d^{i,e}_\ell}_{i\text{-th position}}}\;\;\;,\;0\;,\;\dots\;,\;0
			\bigg)
		\end{equation}
		gives an element $\tau_{\vec{t}}(\Omega^e)$ that has one of its $i$-normal facets contained in $\partial \Omega_\ell \backslash \partial\Omega$.
	\end{definition}

	\begin{definition}\label{def:non-overloaded-border-element}
		For $e \in \mathcal{M}^{\act}_\ell$, $\Omega^e$ is called a well-behaved border element if
		\begin{itemize}[leftmargin=0.28in]
			\item $\partial\Omega^e \cap \left(\partial \Omega_\ell \backslash \partial\Omega\right) \neq \emptyset$, and,
			\item for all $i$, $d^{i,e}_{\ell} = 0$ or $d^{i,e}_{\ell} > p^i$.
		\end{itemize}
	\end{definition}

	For a well-behaved border element $\Omega^e$, let $F^e$ be a set containing some of the $l$-normal facets of $\Omega^e$.
	Let ${F}^{e,\ast}$ denote any subset of $F^e$ be such that the intersection of all facets in ${F}^{e,\ast}$, denoted $\text{intrsct}({F}^{e,\ast}) $, is contained in $\partial \Omega^{e} \cap \left(\partial \Omega_\ell \backslash\partial\Omega \right)$.
	Then, consider the $F^e$ that minimizes the following:
	\begin{eqnarray}\label{eq:def-element-type-boundary}
		\argmin_{F^e}\left\{|F^e|~:~~~\bigcup_{\mathclap{\substack{F^{e,\ast} \subset F^e \\\text{intrsct}(F^{e,\ast})\subset\partial \Omega_\ell \backslash\partial\Omega  } }}\text{intrsct}(F^{e,\ast}) = \partial \Omega^{e} \cap \left(\partial \Omega_\ell \backslash\partial\Omega \right) \right\}\;.
	\end{eqnarray}
	Then, we associate a direction to $\Omega^e$, denoted $\vec{n}^e_\ell$, and define it as the sum of unit normals for each facet  in $F^e$ (by convention, we assume that each unit normal is pointing into $\Omega^e$).
	
	\begin{assumpBox}\label{ass:wide-refinements}
		For any well-behaved border element  $\Omega^e =$
  \newline \noindent$ \bigtimes_{i=1}^n \left(\xi_{k^i,\ell}^i~,~ \xi^i_{k^i+1,\ell}\right) $ and the direction $\vec{n}^{e}_\ell$ associated to it,
		the following containment must hold,
		\begin{equation}\label{eq:wide-refinement-domain}
			\bigtimes_{i=1}^n \left(\xi_{k^i + (n^{i,e}_\ell-1) p^i ,\ell}^i~,~ \xi^i_{k^i+(n^{i,e}_\ell+1) p^i + (1-n^{i,e}_\ell),\ell}\right) \subseteq \Omega_\ell\;.
		\end{equation}
		% if  $\Omega^e = \bigtimes_{i=1}^n \left(\xi_{k^i,\ell}^i~,~ \xi_{k^i+1,\ell}\right)$.
	\end{assumpBox}

	\begin{definition}\label{def:projection-element-boundary}
		Given a well-behaved border element $\Omega^e = \bigtimes_{i=1}^n \left(\xi_{k^i,\ell}^i~,~ \xi^i_{k^i+1,\ell}\right)$, and the direction $\vec{n}^{e}_\ell$ associated to it,
		we define the projection element generated by $\Omega^e$ as
		\begin{equation}
			\widehat{\Omega}^e = \bigtimes_{i=1}^n \left(\xi_{k^i,\ell}^i~,~ \xi^i_{k^i+1+n^{i,e}_\ell (p^i-1),\ell}\right)\;.
		\end{equation}
	\end{definition}
	
     \begin{lemma}\label{lem:non-decreasing-trunc}
		Let $\Omega^e$ be a well-behaved border element, and let $\tau_{\vec{t}_1}(\Omega), \tau_{\vec{t}}(\Omega) \subset \widehat{\Omega}^e $, $|t_1^i|\geq |t^i|$ for all $i$, such that they are both contained in the same level $\ell-1$ element. If $T_j$ is the truncation of some level $\ell-1$ B-spline and does not vanish on $\tau_{\vec{t}_1}(\Omega)$, then $T_j$ doesn't vanish on $\tau_{\vec{t}}(\Omega)$.
	\end{lemma}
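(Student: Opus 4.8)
The plan is to pass from $T_j$ to a single level-$\ell$ truncation $\trunc_\ell(B_{\vec j,\ell-1})$ of a level-$(\ell-1)$ B-spline, to rewrite the non-vanishing set of $T_j$ in terms of supports of the level-$\ell$ B-splines that survive the truncation, and then to transport such a surviving B-spline from $\tau_{\vec t_1}(\Omega^e)$ to $\tau_{\vec t}(\Omega^e)$ by shifting its multi-index towards $\partial\Omega_\ell$. First I would observe that, by Assumption~\ref{ass:MeshGrading}, a THB-spline originating at level $\ell-1$ is supported only on active elements of levels $\ell-1$ and $\ell$, hence has support disjoint from $\Omega_{\ell+1}$ so that every truncation at a level $>\ell$ acts trivially on it; thus $T_j=\trunc_\ell(B_{\vec j,\ell-1})$ for some $\vec j$. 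By Assumption~\ref{ass:bisection} the level-$\ell$ knots bisect the level-$(\ell-1)$ knots, so the refinement relation $B_{\vec j,\ell-1}=\sum_{\vec i}c_{\vec i}B_{\vec i,\ell}$ is a tensor product of univariate dyadic subdivision masks: every $c_{\vec i}\ge 0$, and $c_{\vec i}>0$ precisely when $\supp(B_{\vec i,\ell})\subseteq\supp(B_{\vec j,\ell-1})$. Consequently $T_j=\sum_{\vec i\in\mathcal A}c_{\vec i}B_{\vec i,\ell}$, where $\mathcal A$ is the set of $\vec i$ with $\supp(B_{\vec i,\ell})\subseteq\supp(B_{\vec j,\ell-1})$ and $B_{\vec i,\ell}\in\mathcal B_\ell^{\nact}$. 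Since this is a non-negative combination of non-negative functions and all supports involved are unions of level-$\ell$ elements, $T_j$ does not vanish on a level-$\ell$ element $E$ if and only if $E\subseteq\supp(B_{\vec i,\ell})$ for some $\vec i\in\mathcal A$. Applying this with $E=\tau_{\vec t_1}(\Omega^e)$ produces some $\vec i\in\mathcal A$ with $i^m\in[k^m+t_1^m-p^m,\,k^m+t_1^m]$ for every $m$.

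Next I would build a matching $\vec i'\in\mathcal A$ with $\tau_{\vec t}(\Omega^e)\subseteq\supp(B_{\vec i',\ell})$. After reflecting coordinates we may assume each $n^{m,e}_\ell\in\{0,1\}$; then $t^m=t_1^m=0$ in every non-extended direction and $0\le t^m\le t_1^m\le p^m-1$ otherwise, and the hypothesis that $\tau_{\vec t_1}(\Omega^e)$ and $\tau_{\vec t}(\Omega^e)$ share a level-$(\ell-1)$ parent forces $t^m\in\{t_1^m,t_1^m-1\}$ in each direction. In directions where $t^m=t_1^m$ I set $i'^m:=i^m$; in the extended directions where $t^m=t_1^m-1$ I let $i'^m$ be the largest index $\le k^m+t^m$ with $\supp(B_{i'^m,\ell})\subseteq\supp(B_{j^m,\ell-1})$. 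The key point is that $\supp(B_{\vec j,\ell-1})$, being a union of level-$(\ell-1)$ elements, contains the common level-$(\ell-1)$ parent of the $m$-th spans of $\tau_{\vec t_1}(\Omega^e)$ and $\tau_{\vec t}(\Omega^e)$; a short comparison of index ranges then shows $i'^m$ exists and lies in $\{i^m,i^m-1\}$. By construction $\supp(B_{\vec i',\ell})\subseteq\supp(B_{\vec j,\ell-1})$, $\tau_{\vec t}(\Omega^e)\subseteq\supp(B_{\vec i',\ell})$, and $\vec i'\le\vec i$ componentwise, with equality in all non-extended directions and differences of at most one everywhere.

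It then remains to establish the one nontrivial fact that $B_{\vec i',\ell}\in\mathcal B_\ell^{\nact}$, i.e.\ $\supp(B_{\vec i',\ell})\not\subseteq\Omega_\ell$. By Assumption~\ref{ass:wide-refinements} the box $W$ appearing in \eqref{eq:wide-refinement-domain} satisfies $\widehat\Omega^e\subseteq W\subseteq\Omega_\ell$, and comparing index ranges shows that the support of any level-$\ell$ B-spline meeting $\widehat\Omega^e$ has its extent contained in that of $W$ in every non-extended direction; hence it can leave $W$ only through the face of $\widehat\Omega^e$ carried by a boundary facet of $\Omega^e$ in an extended direction. Because $B_{\vec i,\ell}\in\mathcal B_\ell^{\nact}$ while $\tau_{\vec t_1}(\Omega^e)\subseteq\supp(B_{\vec i,\ell})\cap\widehat\Omega^e$, this forces $i^{m^\ast}<k^{m^\ast}$ for some extended direction $m^\ast$ and yields a level-$\ell$ element $E_0\subseteq\supp(B_{\vec i,\ell})$ with $E_0\not\subseteq\Omega_\ell$ whose $m^\ast$-span lies on the outer side of the hyperplane $\{x^{m^\ast}=\xi^{m^\ast}_{k^{m^\ast},\ell}\}$ supporting that facet. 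Since $\vec i'\le\vec i$ with differences only in extended directions and of size at most one, the translate $E_0'$ of $E_0$ by those same differences is still contained in $\supp(B_{\vec i',\ell})$; and, using that $\Omega^e$ is a well-behaved border element so that $d^{m,e}_\ell=0$ in every extended direction (Definition~\ref{def:non-overloaded-border-element}), one checks that $E_0'$ is again a level-$\ell$ element on the outer side of such a facet, hence $E_0'\not\subseteq\Omega_\ell$. This gives $\vec i'\in\mathcal A$, so $T_j$ does not vanish on $\tau_{\vec t}(\Omega^e)$.

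The main obstacle I anticipate is exactly the last step: showing that translating the protruding element $E_0$ towards $\partial\Omega_\ell$ cannot push it back inside $\Omega_\ell$. This is where well-behavedness of $\Omega^e$ — which makes $\partial\Omega_\ell$ behave like a coordinate hyperplane in a neighbourhood of $\Omega^e$ whose size exceeds the translation amounts involved — has to be combined with Assumption~\ref{ass:wide-refinements}, and the bookkeeping of which faces of the translates of $\Omega^e$ are exposed must be done carefully. Everything before that step should be routine: the reduction uses only Assumptions~\ref{ass:bisection} and \ref{ass:MeshGrading}, the non-vanishing characterization is immediate from non-negativity of the refinement masks, and the construction of $\vec i'$ is elementary once the ``same level-$(\ell-1)$ element'' hypothesis is used to keep the admissible index ranges nonempty.
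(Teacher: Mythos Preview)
Your overall strategy coincides with the paper's: reduce $T_j$ to a single level-$\ell$ truncation of a level-$(\ell-1)$ B-spline $B_{\vec r,\ell-1}$, write the refinement relation with non-negative coefficients, locate a surviving $B_{\vec q,\ell}\in\mathcal B_\ell^{\nact}$ supported on $\tau_{\vec t_1}(\Omega^e)$, and shift its multi-index to produce one supported on $\tau_{\vec t}(\Omega^e)$. The only substantive divergence is in how you argue that the shifted spline remains in $\mathcal B_\ell^{\nact}$, which you correctly flag as the crux.

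The paper handles this step in one line and without invoking well-behavedness or Assumption~\ref{ass:wide-refinements}. Because $\tau_{\vec t_1}(\Omega^e)$ and $\tau_{\vec t}(\Omega^e)$ share a level-$(\ell-1)$ parent, each $t_1^i-t^i\in\{0,1\}$. The original $B_{\vec q,\ell}\in\mathcal B_\ell^{\nact}$ meets some active level-$(\ell-1)$ element $\Omega^{e'}$; since a level-$(\ell-1)$ cell spans two level-$\ell$ cells in each direction, a shift of the level-$\ell$ index by at most one keeps the support overlapping that same $\Omega^{e'}$, so $B_{\vec q-\vec t_1+\vec t,\ell}\in\mathcal B_\ell^{\nact}$ as well. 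No geometry of $\partial\Omega_\ell$ near $\Omega^e$ is needed.

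Your route through the wide-refinement box $W$ and facet bookkeeping is considerably more involved, and its final move is not fully justified. You correctly show that a level-$\ell$ B-spline meeting $\widehat\Omega^e$ can only leave $W$ through a lower face in an extended direction; but leaving $W$ is not the same as leaving $\Omega_\ell$, since $W\subseteq\Omega_\ell$ may be a strict inclusion and $\Omega_\ell$ is not a box. Consequently the existence of $E_0\subseteq\supp(B_{\vec i,\ell})$ with $E_0\not\subseteq\Omega_\ell$ lying ``on the outer side of the hyperplane $\{x^{m^\ast}=\xi^{m^\ast}_{k^{m^\ast},\ell}\}$'' does not follow, and the subsequent claim that the translate $E_0'$ remains outside $\Omega_\ell$ is left hanging. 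The paper's coarse-element argument sidesteps this entirely; replacing your last paragraph with that observation would close the proof cleanly.
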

    \begin{proof}
		Assume without loss of generality that all components of $\vec{n}^{e}_\ell$ are non-negative.
		This implies that $n^{i,e}_\ell (p^i - 1) \geq t^i_1 \geq t^i \geq 0$ for all $i$.
		
		Let $T_j$ be equal to the truncation of the B-spline $B_{\vec{r},\ell-1}$.
        Note that $B_{\vec{r},\ell-1}$ has support on both elements $\tau_{\vec{t}_1}(\Omega)$ and $\tau_{\vec{t}}(\Omega)$.
		Then, for some $s^1,\dots,s^n$, we can express $B_{\vec{r},\ell-1}$ as
		\begin{equation}
			B_{\vec{r},\ell-1}
			=
			\sum_{q^1=s^1}^{s^1+p^1+1}\cdots\sum_{q^n=s^n}^{s^n+p^n+1}
			c_{\vec{q}}B_{\vec{q},\ell}\;.
		\end{equation}
		Since $T_j$ has support on $\tau_{\vec{t}_1}(\Omega)$, there is a non-zero $c_{\vec{q}}$ for $B_{\vec{q},\ell}\in\mathcal{B}_\ell^{\nact}$ with support on $\tau_{\vec{t}_1}(\Omega^e)$. If $B_{\vec{q},\ell}$ has support on $\tau_{\vec{t}}(\Omega^e)$, we are done. Otherwise,
		\begin{equation}
			% q^i \geq \max(s^i + t^i_1 - t^i, s^i + 2)\;,\quad i = 1, \dots, n\;.
            q^i \geq s^i + t^i_1 - t^i,~ i = 1, \dots, n\;.
		\end{equation}
        As $B_{\vec{q},\ell}\in\mathcal{B}_\ell^{\nact}$ has support on some element $e'\in\mathcal{M}_{\ell-1}^{\act}$ and since $0\leq t_1^i-t^i \leq 1$, we have that $B_{\vec{q}-\vec{t}_1+\vec{t},\ell}$ has support on the same level $\ell-1$ element $\Omega^{e'}$ and thus $B_{\vec{q}-\vec{t}_1+\vec{t},\ell}\in\mathcal{B}_\ell^{\nact}$. But then, $B_{\vec{q}-\vec{t}_1+\vec{t},\ell}$ and thus $T_j$ have support on $\tau_{\vec{t}}(\Omega^e)$.
	\end{proof}
	
	\begin{assumpBox}\label{ass:unique-projection-elements}
		Let $\Omega^e$, $e \in \mathcal{M}_\ell^{\act}$, be so that its $\vec{t} = (t^1,\dots,t^n)$ translation, where $|t^i| < p^i$ for all $i$, satisfies
		$\partial\tau_{\vec{t}}(\Omega^{e}) \cap \left(\partial \Omega_\ell \backslash\partial\Omega\right)\neq \emptyset$.
		Then, there exists a unique projection element that contains $\Omega^e$.
	\end{assumpBox}

    % \begin{remark}\label{rem:unique-projection-elements}
    %     A direct consequence of Assumption \ref{ass:wide-refinements} and \ref{ass:unique-projection-elements} is that the projection element stated in Assumption \ref{ass:unique-projection-elements} is unique.
    % \end{remark}
	
	\begin{proposition}\label{prop:lin-ind-propagation}
		Consider a well-behaved border element $\Omega^e$ and consider $\tau_{\vec{t}_1}(\Omega^e) \subset \widehat{\Omega}^e$ for some $\vec{t}_1 = (t_1^1,\dots,t_1^n)$.
		Then, the following set of THB-splines on $\tau_{\vec{t}_1}(\Omega^{e})$ is linearly independent,
		\begin{equation}\label{eq:prop-lin-ind-propagation-THB-splines}
			\begin{split}
				\bigg\{
				T_j:
				&\supp(T_j) \cap \tau_{\vec{t}_1}(\Omega^{e}) \neq \emptyset
				\text{,~and,~}\\
				&\supp(T_j) \cap \tau_{\vec{t}}(\Omega^{e}) = \emptyset
				\text{~where~}
				\tau_{\vec{t}}(\Omega^{e}) \subset \widehat{\Omega}^e\;,\;
				\vec{t}_1 \neq \vec{t}
				\text{~and~}
				|t_1^i| \geq |t^i|\text{~for all~}i
				\bigg\}\;.
			\end{split}
		\end{equation}
	\end{proposition}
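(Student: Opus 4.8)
The plan is to reduce the statement, through a chain of standard simplifications, to a linear-independence claim about the knot-insertion matrix restricted to the non-active degrees of freedom on the single element $\tau_{\vec t_1}(\Omega^e)$, and then to settle that claim using the banded tensor-product structure of that matrix together with Lemma \ref{lem:non-decreasing-trunc} and Assumption \ref{ass:wide-refinements}. After reflecting coordinate axes as needed I may assume that every component of $\vec n^e_\ell$ is non-negative, so that $\Omega^e$ is the lowest-index corner of $\widehat\Omega^e$, the elements $\tau_{\vec t}(\Omega^e)\subset\widehat\Omega^e$ are exactly those with $0\le t^i\le n^{i,e}_\ell(p^i-1)$, and the condition ``$|t_1^i|\ge|t^i|$ for all $i$'' in \eqref{eq:prop-lin-ind-propagation-THB-splines} reads ``$0\le t^i\le t_1^i$ for all $i$''. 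I then restrict every THB-spline to the (active, level-$\ell$) element $\tau_{\vec t_1}(\Omega^e)$; by Assumption \ref{ass:MeshGrading} only levels $\ell-1$ and $\ell$ contribute there, so each THB-spline supported on it restricts either to a level-$\ell$ B-spline $B_{\vec q,\ell}$ with $\vec q$ in the index set $\mathcal Q$ of level-$\ell$ B-splines supported on $\tau_{\vec t_1}(\Omega^e)$, or to the truncation $\trunc_\ell(B_{\vec r,\ell-1})$ of a level-$(\ell-1)$ B-spline. Everything is then expressed in the basis $\{B_{\vec q,\ell}|_{\tau_{\vec t_1}(\Omega^e)}\}_{\vec q\in\mathcal Q}$ of $\mathbb P_{\vec p}(\tau_{\vec t_1}(\Omega^e))$.

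Next I split the set in \eqref{eq:prop-lin-ind-propagation-THB-splines} into its fine members (level-$\ell$ B-splines) and its coarse members (truncated level-$(\ell-1)$ B-splines). A fine member is one of the basis functions $B_{\vec q,\ell}$, and the requirement that it not be supported on $\tau_{\vec t}(\Omega^e)$ for any $0\le\vec t\le\vec t_1$ with $\vec t\neq\vec t_1$ forces $q^i=k^i+t_1^i$ for every $i$ with $t_1^i>0$ (the remaining components being unconstrained inside $\mathcal Q$); hence the fine members are pairwise distinct basis functions and are automatically linearly independent. A coarse member, restricted to $\tau_{\vec t_1}(\Omega^e)$, is a combination $\sum_{\vec q}c^{(\vec r)}_{\vec q}\,B_{\vec q,\ell}|_{\tau_{\vec t_1}(\Omega^e)}$ in which only the non-active indices $\mathcal Q_{\nact}:=\{\vec q\in\mathcal Q:\supp(B_{\vec q,\ell})\nsubseteq\Omega_\ell\}$ occur, because the truncation deletes precisely the active children. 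Thus, in the chosen basis, the fine members occupy distinct coordinates inside the active block $\mathcal Q_{\act}:=\mathcal Q\setminus\mathcal Q_{\nact}$ while the coarse members live entirely in the complementary block $\mathcal Q_{\nact}$; the blocks being disjoint, it remains only to prove that the coarse members of \eqref{eq:prop-lin-ind-propagation-THB-splines} are linearly independent on $\tau_{\vec t_1}(\Omega^e)$, i.e.\ that the truncated coefficient vectors $\big((c^{(\vec r)}_{\vec q})_{\vec q\in\mathcal Q_{\nact}}\big)$ over the admissible indices $\vec r$ are linearly independent.

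For the coarse part I would first invoke Lemma \ref{lem:non-decreasing-trunc}: a coarse THB-spline can belong to the set only if, for each direction $i$ with $t_1^i>0$, the element $\tau_{\vec t_1}(\Omega^e)$ is the lower child in direction $i$ of its level-$(\ell-1)$ parent, since otherwise the lemma would force its support onto the translate of $\tau_{\vec t_1}(\Omega^e)$ with $i$-th entry reduced by one, contradicting membership in the set. Then, using Assumption \ref{ass:wide-refinements}, I would pin down $\mathcal Q_{\nact}$ explicitly: it is a thin region with at most $p^i-t_1^i$ layers in each border direction $i$ (those with $n^{i,e}_\ell=1$), sitting against the part of $\partial\Omega_\ell$ normal to $\vec n^e_\ell$. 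By Assumption \ref{ass:bisection} the coefficient matrix $(c^{(\vec r)}_{\vec q})$ is a tensor product over directions of univariate dyadic knot-insertion matrices, each banded, strictly positive within its band and totally positive. Exploiting this, I would attach to each admissible $\vec r$ a distinguished column of $\mathcal Q_{\nact}$ --- the last non-active child reached by $B_{\vec r,\ell-1}$ when moving along $\vec n^e_\ell$ --- and show, using the support constraints from the first step together with the bandedness, that this assignment is injective and can be made triangular for a suitable ordering of $\mathcal Q_{\nact}$, which gives the coefficient matrix full row rank. For several simultaneous border directions I expect to argue by induction on the number of nonzero components of $\vec n^e_\ell$, peeling off one direction at a time using the tensor-product factorisation, the single-direction case being the banded-triangular argument just sketched.

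The genuinely hard step is this last one: matching the admissible coarse indices $\vec r$ with columns of $\mathcal Q_{\nact}$ so that the resulting square submatrix is actually triangular. This requires a simultaneous control of (i) which refined level-$\ell$ B-splines are active, which is exactly what Assumption \ref{ass:wide-refinements} supplies, and (ii) how the ``not supported on a smaller translation'' condition, propagated through Lemma \ref{lem:non-decreasing-trunc}, prunes the set of admissible $\vec r$; the dyadic index arithmetic of Assumption \ref{ass:bisection} is what makes (i) and (ii) fit together, and handling the corner case where $\vec n^e_\ell$ has several nonzero components is where most of the bookkeeping lives.
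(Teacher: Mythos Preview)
Your reduction is sound and takes a genuinely different route from the paper. Both begin with the same normalization of $\vec n^e_\ell$, but then diverge. You split the set \eqref{eq:prop-lin-ind-propagation-THB-splines} into fine and coarse members and exploit the active/non-active block structure of the level-$\ell$ basis on $\tau_{\vec t_1}(\Omega^e)$ to decouple them, reducing everything to linear independence of the coarse members, which you then attack via the banded structure of the dyadic knot-insertion matrix and a triangular matching. The paper instead case-splits on whether $\tau_{\vec t_1}(\Omega^e)$ shares a level-$(\ell-1)$ ancestor with some strictly smaller translate: if so, Lemma~\ref{lem:non-decreasing-trunc} empties the coarse part (your first use of the lemma absorbs this case); if not, every $t_1^i$ is even, and the paper writes down explicit index sets $S^i_{\ell-1}$, $S^i_\ell$, $A^{i,l}_\ell$, $I^{i,l}_{\ell-1}$ for the relevant B-splines at both levels and concludes by a dimension count inside the polynomial subspace vanishing to order $p^i$ on the lower $i$-facet whenever $t_1^i>0$, using that $|\bigtimes_i S^i_{\ell-1}|=|\bigtimes_i S^i_\ell|$ and that the number of coarse B-splines truncated to zero equals the number of active fine ones. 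Your structural argument is cleaner to state and would likely generalize better, but the step you flag as hardest---injectively matching admissible coarse indices $\vec r$ to columns of $\mathcal Q_{\nact}$---is exactly what the paper's explicit enumeration sidesteps; note in particular that $\mathcal Q_{\nact}$ is \emph{not} a product set when several components of $\vec n^e_\ell$ are nonzero, so the tensor factorization of the knot-insertion matrix does not survive the restriction, and the direction-by-direction induction you sketch must account for this.
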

	\begin{proof}
		Assume without loss of generality that $\vec{n}^{e}_\ell$ is such that all of its components are non-negative, and let $\Omega^e = \bigtimes_{i=1}^n \left(\xi_{k^i,\ell}^i~,~ \xi^i_{k^i+1,\ell}\right)$.
		In the following, we use $\vec{t}$ to denote a translate vector such that $\tau_{\vec{t}}(\Omega^{e}) \subset \widehat{\Omega}^e$, $\vec{t}_1 \neq \vec{t}$,
		$t_1^i \geq t^i$ for all $i$.
		
		The claim is immediate for the special case when there exists a $\vec{t}$ such that $\tau_{\vec{t}_1}(\Omega^e)$ and $\tau_{\vec{t}}(\Omega^e)$ are contained in a common element $e'\in\mathcal{M}_{\ell-1}$.
		In this case, from Lemma \ref{lem:non-decreasing-trunc}, the THB-splines that contain $\tau_{\vec{t}_1}(\Omega^e)$ in their support but not $\tau_{\vec{t}}(\Omega^e)$ only correspond to level-$\ell$ B-splines.
		The claim then follows from the linear independence of level-$\ell$ B-splines.
		
		The only other case to consider is when $\tau_{\vec{t}_1}(\Omega^e)$ does does not share an ancestor for any  $\tau_{\vec{t}}(\Omega^e)$ satisfying the above conditions.
		In particular, this means that all $t^i_1$ are even numbers. Here, we will show that the desired set of THB-splines \eqref{eq:prop-lin-ind-propagation-THB-splines} spans a subspace of $\mathbb{P}_{\vec{p}}\left(\tau_{\vec{t}_1}(\Omega^e)\right)$, where each polynomial in the subspace vanishes $p^i$ times at the hyperplane $\xi^i = \xi^i_{k^i+t_1^i,\ell}$ if $t_1^{i} > 0$.%; if all $t_1^i = 0$ then we have the full degree $\vec{p}$ polynomial space.
		
		% {\color{red}Fill in the blanks here by repeating the argumentation of proposition 1, i.e., number of truncated-to-0 functions is the same as the number of active functions. The total number corresponds to the dimension of the above THB-space which satisfies the smoothness conditions...}
	    Let the element $e'\in\mathcal{M}_{\ell-1}$	contain $\Omega^e \supset \Omega^{e'}$ and let the level-$(\ell-1)$ B-splines supported on $\Omega^{e'}$, be $B_{\vec{q},\ell-1}$, where
		\begin{equation}
			\vec{q} \in \iset{e'}{}{\mathcal{B}_{\ell-1}}= \bigtimes_{i=1}^n
			\left\{
			r^i\;,\;\dots\;,\;r^i + p^i
			\right\}\;.
		\end{equation}
		Then, the B-splines supported on $\tau_{\vec{t}_1}(\Omega^e)$ and not on any $\tau_{\vec{t}}(\Omega^e)$ as above correspond to $\vec{q} \in \bigtimes_{i=1}^n S^{i}_{\ell-1}$:
		\begin{eqnarray}
			S^{i}_{\ell-1} := \begin{cases}
				\left\{r^i+p^i+\frac{t^i_1}{2} \right\}\;,& t^{i}_1 > 0\;,\\
				\left\{r^i,\dots,r^i+p^i \right\}\;,& \text{else}\;.
			\end{cases}
		\end{eqnarray}
		Similarly, let the level-$\ell$ B-splines supported on $\Omega^{e}$ be $B_{\vec{q},\ell}$, where
		\begin{equation}
			\vec{q} \in \iset{e}{}{\mathcal{B}_\ell} = \bigtimes_{i=1}^n
			\left\{
			s^i\;,\;\dots,\;s^i + p^i
			\right\}\;.
		\end{equation}
		Then, the B-spline supported on $\tau_{\vec{t}_1}(\Omega^e)$ and not $\tau_{\vec{t}}(\Omega^e)$ correspond to $\vec{q} \in \bigtimes_{i=1}^n S^{i}_{\ell}$ where
		\begin{eqnarray}
			S^{i}_{\ell} := \begin{cases}
				\left\{s^i+p^i+t^i_1 \right\}\;,& t^{i}_1 > 0\;,\\
				\left\{s^i,\dots,s^i+p^i \right\}\;,& \text{else}\;.
			\end{cases}
		\end{eqnarray}
		Moreover, the active level-$\ell$ B-splines from the above set correspond to $B_{\vec{q},\ell}$, $\vec{q} \in \bigcup_{l=1}^n\bigtimes_{i=1}^n A^{i,l}_\ell$, 
		where
		\begin{equation}
			A^{i,l}_\ell :=
			\begin{dcases}
				\left\{s^i + p^i+t^i_1\right\}\;, & t^i_1 > 0\;,\\
				\left\{
				s^i + p^i
				\right\}\;, & t^i_1 = 0\text{~and~}d^{i,e}_\ell = 0\text{~or~}i = l\;,\\
				\left\{
				s^i\;,\;\dots\;,\;s^i + p^i
				\right\}\;, & t^i_1 = 0\text{~and~}d^{i,e}_\ell > p^i\text{~and~}i \neq l\;.
			\end{dcases}
		\end{equation}
		
		The level-$(\ell-1)$ B-splines from $\bigtimes_{i=1}^nS^i_{\ell-1}$ that are truncated to 0 on $\tau_{\vec{t}_1}(\Omega^e)$ by the above active level-$\ell$ B-splines are $B_{\vec{q},\ell-1}$, $\vec{q} \in \bigcup_{l=1}^n\bigtimes_{i=1}^n I^{i,l}_{\ell-1}$, where
		\begin{equation}
			I^{i,l}_{\ell-1} :=
			\begin{dcases}
				\left\{
				r^i + p^i + \frac{t^i_1}{2}
				\right\}\;,& t^i_1 > 0\;,\\
				\left\{
				r^i + p^i
				\right\}\;, & t^i_1 = 0 \text{~and~}d^{i,e}_\ell = 0\text{~or~}i = l\;,\\
				\left\{
				r^i\;,\;\dots\;,\;r^i + p^i
				\right\}\;, & t^i_1 = 0 \text{~and~}d^{i,e}_\ell > p^i \text{~and~}i = l\;,\\
			\end{dcases}
		\end{equation}
		
		From the above, we notice that the cardinalities of $\bigtimes_{i=1}^n S^{i}_{\ell-1}$ and $\bigtimes_{i=1}^n S^{i}_{\ell}$ are the same, and the number of level-$(\ell-1)$ THB-splines truncated to zero above is exactly equal to the number of active level-$\ell$ THB-splines.
		Consequently, the complete set of THB-splines supported on $\tau_{\vec{t}_1}(\Omega^e)$ and not $\tau_{\vec{t}}(\Omega^e)$ is linearly independent, and each THB-spline corresponds to a truncated level-$(\ell-1)$ B-splines from the following set,
		\begin{equation}
			\bigtimes_{i=1}^n S^{i}_{\ell-1} \bigg\backslash \bigcup_{l=1}^n\bigtimes_{i=1}^n I^{i,l}_{\ell-1}\;,
		\end{equation}
		or a level-$\ell$ B-spline from the following set,
		\begin{equation}
			\bigcup_{l=1}^n\bigtimes_{i=1}^n A^{i,l}_\ell\;.
		\end{equation}
	\end{proof}

	\begin{corollary}\label{cor:good-border-elements-non-overloaded}
		Well-behaved border elements are not overloaded.
	\end{corollary}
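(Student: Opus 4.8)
The plan is to obtain this as an immediate special case of Proposition~\ref{prop:lin-ind-propagation}, applied with the trivial translate $\vec{t}_1 = \vec{0}$. First I would note that $\tau_{\vec{0}}(\Omega^e) = \Omega^e$ and that, since each $n^{i,e}_\ell(p^i-1) \ge 0$, Definition~\ref{def:projection-element-boundary} yields the containment $\Omega^e = \tau_{\vec{0}}(\Omega^e) \subseteq \widehat{\Omega}^e$; hence the hypothesis $\tau_{\vec{t}_1}(\Omega^e) \subset \widehat{\Omega}^e$ of the proposition is satisfied for $\vec{t}_1 = \vec{0}$.

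Next I would check that, for $\vec{t}_1 = \vec{0}$, the auxiliary requirement in \eqref{eq:prop-lin-ind-propagation-THB-splines} is vacuous: there is no translate $\tau_{\vec{t}}(\Omega^e) \subset \widehat{\Omega}^e$ with $\vec{t} \neq \vec{t}_1$ and $|t_1^i| \ge |t^i|$ for all $i$, because $|t_1^i| = 0 \ge |t^i|$ forces $\vec{t} = \vec{0} = \vec{t}_1$, contradicting $\vec{t} \neq \vec{t}_1$. Consequently the second defining condition of the set in \eqref{eq:prop-lin-ind-propagation-THB-splines} imposes nothing, and the set collapses to $\{\, T_j : \supp(T_j) \cap \Omega^e \neq \emptyset \,\}$, i.e.\ precisely the collection of THB-splines supported on $\Omega^e$.

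Therefore Proposition~\ref{prop:lin-ind-propagation} asserts exactly that the THB-splines supported on $\Omega^e$ are linearly independent, which by Definition~\ref{def:overloading} means $\Omega^e$ is not overloaded. There is no genuine obstacle here; the only points worth verifying explicitly are the containment $\Omega^e \subseteq \widehat{\Omega}^e$ and the emptiness of the ``strictly smaller translate'' quantifier when $\vec{t}_1 = \vec{0}$, both of which are immediate.
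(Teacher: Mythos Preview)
Your proposal is correct and is exactly the paper's approach: the paper's proof consists of the single sentence ``The result follows by considering $\vec{t} = (0,\dots,0)$ in Proposition~\ref{prop:lin-ind-propagation},'' and you have simply spelled out why that specialization works (the vacuous second condition and the resulting identification of the set in \eqref{eq:prop-lin-ind-propagation-THB-splines} with all THB-splines supported on $\Omega^e$).
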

	\begin{proof}
		The result follows by considering $\vec{t} = (0,\dots,0)$ in Proposition \ref{prop:lin-ind-propagation}.
	\end{proof}
	
	\begin{theorem}\label{thm:non-overloaded-projection-element}
		Let $\Omega^e$ be a well-behaved border element.
		Then, $\widehat{\Omega}^e$ is not overloaded.
	\end{theorem}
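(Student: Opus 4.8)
The plan is to deduce the theorem from Proposition~\ref{prop:lin-ind-propagation} by peeling off, one mesh element at a time, the coefficients of an arbitrary vanishing linear combination of the THB-splines supported on $\widehat{\Omega}^e$.

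First I would fix notation. Assuming without loss of generality that all components of $\vec{n}^{e}_\ell$ are non-negative and writing $\Omega^e = \bigtimes_{i=1}^n (\xi^i_{k^i,\ell}, \xi^i_{k^i+1,\ell})$, Definition~\ref{def:projection-element-boundary} shows that $\widehat{\Omega}^e$ is, up to a set of measure zero, the union of the mesh elements $\tau_{\vec{t}}(\Omega^e)$ with $\vec{t}$ ranging over the box $B := \bigtimes_{i=1}^n \{0,1,\dots, n^{i,e}_\ell(p^i-1)\}$; by Assumption~\ref{ass:wide-refinements} each such translate lies in $\Omega_\ell$, so the THB-splines on them behave as analyzed in Proposition~\ref{prop:lin-ind-propagation}. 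Observe also that on $B$ the condition ``$|t_1^i| \geq |t^i|$ for all $i$'' appearing in Proposition~\ref{prop:lin-ind-propagation} reduces to the coordinatewise partial order $\vec{t} \preceq \vec{t}_1$.

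Now suppose $\sum_j c_j T_j \equiv 0$ on $\widehat{\Omega}^e$, the sum running over all THB-splines supported on $\widehat{\Omega}^e$. I would pick a linear extension $\vec{t}_{(1)} = \vec{0}, \vec{t}_{(2)}, \dots, \vec{t}_{(K)}$ of $(B, \preceq)$ and prove by induction on $m$ that $c_j = 0$ whenever $\supp(T_j)$ meets $\tau_{\vec{t}_{(m')}}(\Omega^e)$ for some $m' \leq m$. For the inductive step, restrict the identity to $\tau_{\vec{t}_{(m)}}(\Omega^e)$: the THB-splines whose support meets this element split into those that also meet some $\tau_{\vec{s}}(\Omega^e)$, $\vec{s} \in B$, $\vec{s} \prec \vec{t}_{(m)}$, whose coefficients already vanish by the inductive hypothesis since such $\vec{s}$ precedes $\vec{t}_{(m)}$ in the linear extension, and the remaining ones, which are precisely the set \eqref{eq:prop-lin-ind-propagation-THB-splines} for $\vec{t}_1 = \vec{t}_{(m)}$ and hence linearly independent on $\tau_{\vec{t}_{(m)}}(\Omega^e)$ by Proposition~\ref{prop:lin-ind-propagation}; so their coefficients vanish too. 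The base case $m=1$ is Corollary~\ref{cor:good-border-elements-non-overloaded}. After step $K$, every THB-spline supported on $\widehat{\Omega}^e$ is nonzero on at least one tile $\tau_{\vec{t}}(\Omega^e)$, $\vec{t}\in B$ (it is handled at the step indexed by a $\preceq$-minimal such tile), so all $c_j$ are zero, and by Definition~\ref{def:overloading} $\widehat{\Omega}^e$ is not overloaded.

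I expect the main obstacle to be the bookkeeping in the inductive step: one must verify that the ``remaining'' THB-splines on $\tau_{\vec{t}_{(m)}}(\Omega^e)$ coincide \emph{exactly} with the set described in \eqref{eq:prop-lin-ind-propagation-THB-splines}, the crucial point being that the exclusion there ranges precisely over translates $\preceq \vec{t}_{(m)}$ contained in $\widehat{\Omega}^e$, and that the linear extension of $\preceq$ makes the inductive hypothesis cover all of those strictly smaller translates. The geometric facts, namely that the admissible translates form a box and tile $\widehat{\Omega}^e$, are immediate from Definition~\ref{def:projection-element-boundary}, and all the linear-independence content is carried by Proposition~\ref{prop:lin-ind-propagation}.
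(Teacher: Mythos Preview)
Your proposal is correct and follows the same approach as the paper: start from Corollary~\ref{cor:good-border-elements-non-overloaded} on $\Omega^e$ and propagate to all translates in $\widehat{\Omega}^e$ via repeated applications of Proposition~\ref{prop:lin-ind-propagation}. The paper leaves the order of these applications implicit, whereas you make it precise with a linear extension of the coordinatewise partial order on the translation box; your splitting in the inductive step is exactly right, since the translates $\vec{s}\prec\vec{t}_{(m)}$ all precede $\vec{t}_{(m)}$ in the linear extension and the complement is literally the set~\eqref{eq:prop-lin-ind-propagation-THB-splines}.
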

	\begin{proof}
		Consider the following representation of the zero function on $\widehat{\Omega}^e$ as a linear combination of THB-splines on $\widehat{\Omega}^e$,
		\begin{equation}
			0 = \sum_{j}c_jT_j|_{\widehat{\Omega}^e}\;.
		\end{equation}
		Starting from the linear independence on the well-behaved border element $\Omega^e$ shown in Corollary \ref{cor:good-border-elements-non-overloaded}, we can conclude that some of the $c_j$ in the above sum should be zero.
		Consequently, repeated applications of Proposition \ref{prop:lin-ind-propagation} can be used to show that all the remaining $c_j$ should be zero as well since every element in $\widehat{\Omega}^e$ can be obtained by translations of $\Omega^e$.
	\end{proof}

	We finish this section by defining well-behaved interior elements and the corresponding projection elements.
	
	\begin{definition}\label{def:projection-element-interior}
		Given $e \in \mathcal{M}^{\act}_\ell$, $\Omega^e$ is called a well-behaved interior element if the THB-splines supported on it only correspond to level-$\ell$ B-splines.
		The corresponding projection element is defined as $\widehat{\Omega}^e := \Omega^e$.
	\end{definition}

	Clearly, a well-behaved interior element is not overloaded.
	Moreover, as a result of Assumption \ref{ass:unique-projection-elements}, each mesh element is contained in a unique projection element generated by a well-behaved (interior or boundary) element.	

 \section{\Bezier{} projection for THB-splines}
\label{sec:THBProj}
Based on the main result of the previous section, we formulate a \Bezier{} projection for THB-splines by using the notion of projection elements. There are various ways to extend the \Bezier{} projector from \cite{Thomas2015}, and we will show theoretical error estimates for these extensions.

\subsection{General formulation of the projector}
From the results and assumptions from Section \ref{sec:THBSplines}, we can make two observations:
\begin{itemize}
	\item each mesh element is contained in a unique projection element $\widehat{\Omega}^e$ (c.f. Definitions \ref{def:projection-element-boundary} and \ref{def:projection-element-interior} and Assumptions \ref{ass:wide-refinements} and \ref{ass:unique-projection-elements}), and,
	\item none of the projection elements are overloaded (c.f. Theorem \ref{thm:non-overloaded-projection-element}).
\end{itemize}
As a consequence, we can formulate the following generalized \Bezier{} projection methodology for THB-splines.

\paragraph{\textbf{Step 1: Local projections on $\widehat{\Omega}^e$}}
Given an arbitrary well-behaved (boundary or interior) element $\Omega^e$, $e \in \mathcal{M}^{\act}_\ell$, denote with $\mathbb{V}(\widehat{\Omega}^e)$ some local spline space defined on $\widehat{\Omega}^e$; we will make the choice of this spline space concrete in the next subsection.
Then, in the first step of the projection, and given a target function $f \in L^2(\Omega)$, we project $f|_{\widehat{\Omega}^e}$ onto the space $\mathbb{V}(\widehat{\Omega}^e)$. Denote this projection operator as $\widehat{\Pi}^e_0$. Let $\widehat{V}_j$, $j = 1, \dots, \widehat{M}^e := \dim(\mathbb{V}(\widehat{\Omega}^e))$, form a basis of $\mathbb{V}(\widehat{\Omega}^e)$, such that we can write
\begin{equation}\label{eq:local-projection-prelim-thb}
	\widehat{\Pi}^e_0 f = \sum_{j = 1}^{\widehat{M}^e} \hat{f}_j^e \widehat{V}_j\;.
\end{equation}

In the next step, we apply a subsequent $L^2$-projection $\widehat{\Pi}^e_1~:~\mathbb{V}(\widehat{\Omega}^e) \rightarrow \widehat{\mathbb{T}}^e_{\boldsymbol{\Omega}}$, where $\widehat{\mathbb{T}}^e_{\boldsymbol{\Omega}}$ is defined to be the restriction of the THB-spline space  $\mathbb{T}_{\boldsymbol{\Omega}}$ to the projection element $\widehat{\Omega}^e$.
Let $\piset{e}{}{\mathcal{T}_{\boldsymbol{\Omega}}}$ contain the indices of THB-spline basis functions supported on $\widehat{\Omega}^e$.
These are linearly independent by Theorem \ref{thm:non-overloaded-projection-element}.
The action of $\widehat{\Pi}^e_1$ can be encoded in a matrix $\widehat{\mathrm{D}}^e$ such that $\widehat{\vec{c}}^e=\widehat{\mathrm{D}}^e \widehat{\vec{f}}^e$ and:
\begin{equation}
	\label{eq:local-projection-final-thb}
	\widehat{\Pi}^e_1\widehat{\Pi}^e_0 f = 
    %\left(\widehat{\mathrm{D}}^e \hat{\vec{f}}^e\right)^T \widehat{\vec{T}}^e =
    \sum_{\mathclap{j \in \piset{e}{}{\mathcal{T}_{\boldsymbol{\Omega}}}}} \hat{c}^e_jT_j|_{\widehat{\Omega}^e} \;.
\end{equation}
%where $\widehat{\vec{T}}^e$ is a vector that contains the THB-spline basis functions $T_j|_{\widehat{\Omega}^e}$, $j \in \piset{e}{}{\mathcal{T}_{\boldsymbol{\Omega}}}$.

\paragraph{\textbf{Step 2: Weighted-averaging to form the global projection on $\Omega$}}
For a THB-spline $T_j$, the above process gives a coefficient $\hat{c}^e_j$ for each projection element $\widehat{\Omega}^e$ on which $T_j$ does not vanish.
Then, as in the B-spline case, we perform a weighted averaging of the coefficients $\hat{c}^e_j$ to define a unique coefficient for $T_j$,
\begin{eqnarray}
	\label{eq:thb-spline-global-smoothing}
	c_{j} &:=& \sum_{e \in \peset{}{j}{\mathcal{T}_{\boldsymbol{\Omega}}}} \widehat{\omega}_{j}^{e} \hat{c}^{e}_{j}\;,
\end{eqnarray}
where $\widehat{\omega}_{j}^{e}$ are the averaging weights and $\peset{}{j}{\mathcal{T}_{\boldsymbol{\Omega}}}$ is the set of all projection elements where $T_{j}$ does not vanish,
\begin{equation}
	\peset{}{j}{\mathcal{T}_{\boldsymbol{\Omega}}} := 
	\left\{
	e~:~T_j\in\mathcal{T}_{\boldsymbol{\Omega}}~,~\widehat{\Omega}^e \cap \supp(T_{j}) \neq \emptyset
	\right\}\;.
\end{equation}
In particular, similarly to B-splines, we choose the weights to be,
% we make the following choice for the weights,
\begin{equation}
	\widehat{\omega}_{j}^{e} := \frac{\int_{\widehat{\Omega}^e}T_{j}\;dx}{\int_\Omega T_j\;dx}\;.
\end{equation}
This leads to the following definition of the \Bezier{} projector for THB-splines, denote $\Pi~:~L^2(\Omega) \rightarrow \mathbb{T}_{\boldsymbol{\Omega}}$,
\begin{equation}
	\Pi f := \sum_{j=1}^N c_j T_j\;.
\end{equation}

%\begin{subeqnarray}\label{eq:multivariate-polynomial-projection_THB}
%	\Pi^e f & :=& \sum_{i_1,\dots,i_D=1}^{p_1+1,\dots,p_D+1} a_{\vec{i}}^eP_{\vec{i}}^e,\\
%	\vec{a}^e &:=& G^{-1} \vec{\mathfrak{f}}^e,\\
%	\mathfrak{f}_{\vec{i}}^e &=& \frac{1}{|\Omega^e|}\int_{\Omega^e} f(x)P^e_{\vec{i}}(x)dx.
%\end{subeqnarray}

\subsection{Possible choices for $\mathbb{V}(\widehat{\Omega}^e)$}\label{sec:local-space-choices}
The above general methodology provides several options for formulating the \Bezier{} projection operator for THB-splines by varying the choice of $\mathbb{V}(\widehat{\Omega}^e)$ on each projection element $\widehat{\Omega}^e$.
We outline three such choices over here.
Two common features of the following choices are that:
\begin{itemize}
	\item they lead to local projections $\widehat{\Pi}^e_1\widehat{\Pi}^e_0$ that yield optimal a priori error estimates with respect to the mesh size, and,
	\item the corresponding matrices $\widehat{\mathrm{D}}^e$ are independent of the mesh size, in the sense that they stay invariant when the domain and the hierarchical mesh are both scaled up or down.
\end{itemize}

\paragraph{\textbf{Choice 1: global polynomials on $\widehat{\Omega}^e$}}
As a first choice, one can pick $\mathbb{V}(\widehat{\Omega}^e) = \mathbb{P}_{\vec{p}}(\widehat{\Omega}^e)$.
With this choice, the projection $\widehat{\Pi}^e_0$ can be formulated exactly as in \eqref{eq:UnivariatePolynomialProjection}, albeit for a projection element $\widehat{\Omega}^e$ instead of a mesh element $\Omega^e$.
Moreover, since $\mathbb{P}_{\vec{p}}(\widehat{\Omega}^e) \subset \widehat{\mathbb{T}}^e_{\boldsymbol{\Omega}}$, the projection $\widehat{\Pi}^e_1$ is the identity and knot insertion can be used to arrive at the corresponding matrix $\widehat{\mathrm{D}}^e$.
However, \underline{we do not consider this choice} because the resulting $\Pi$ does not preserve THB-splines, and hence is not a projector.

\paragraph{\textbf{Choice 2: THB-splines on $\widehat{\Omega}^e$}}
An alternative is to directly project onto the local THB-spline space by choosing $\mathbb{V}(\widehat{\Omega}^e) = \widehat{\mathbb{T}}^e_{\boldsymbol{\Omega}}$.
Here, the projection $\widehat{\Pi}^e_0$ can be formulated as in \eqref{eq:UnivariatePolynomialProjection} but then using the basis functions $T_j|_{\widehat{\Omega}^e}$.
Consequently, the projection $\widehat{\Pi}^e_1$ is again the identity and the corresponding matrix $\widehat{\mathrm{D}}^e$ is also an identity matrix.

\paragraph{\textbf{Choice 3: discontinuous piecewise-polynomials on $\widehat{\Omega}^e$}}
The final choice we propose is to pick $\mathbb{V}(\widehat{\Omega}^e) = \mathbb{B}^{-1}_{\vec{p},\widehat{\Omega}^e}$, where $\mathbb{B}^{-1}_{\vec{p},\widehat{\Omega}^e}$ is defined to be the space of discontinuous piecewise-polynomials on $\widehat{\Omega}^e$.
With this choice, the projection $\widehat{\Pi}^e_0$ can be formulated as a combination of $L^2$-projections onto $\mathbb{P}_{\vec{p}}(\Omega^{e'})$, see \eqref{eq:UnivariatePolynomialProjection}, for every $\Omega^{e'} \subset \widehat{\Omega}^e$.
In general, $\mathbb{V}(\widehat{\Omega}^e) \supset \widehat{\mathbb{T}}^e_{\boldsymbol{\Omega}}$, and so the projection $\widehat{\Pi}^e_1$ is equivalent to a least-squares problem and the matrix $\widehat{\mathrm{D}}^e$ is the pseudo-inverse of the corresponding linear system.

\subsection{A priori local error estimates}
We begin this section by defining the support extensions of elements, and showing that the assumptions placed so far imply that the size of the support extension is bounded by the local mesh size.

\begin{definition}
    We define the support extension of $\Omega^e$, $e \in \mathcal{M}^{\act}_\ell$, w.r.t. $\mathcal{T}_{\boldsymbol{\Omega}}$ as:
    \begin{eqnarray}
        \widetilde{\Omega}^e := \bigcup_{
        	\mathclap{
        	\substack{
        		j \in \iset{e}{}{\mathcal{T}_{\boldsymbol{\Omega}}}\\
                e' \in \peset{}{j}{\mathcal{T}_{\boldsymbol{\Omega}}}
        		}
        	}
        }
    	% \overline{\widehat{\Omega}}^{e'}
         \closure\left(\widehat{\Omega}^{e'}\right)\;,
    \end{eqnarray}
	% where $\overline{\widehat{\Omega}}^{e'}$ is the closure of $\widehat{\Omega}^{e'}$.
%	Similarly, 
%    And for a projection element $\widehat{\Omega}^e$, the projection support extension $\widetilde{\Omega}^e$ for a THB-spline basis $\mathcal{T}_{\boldsymbol{\Omega}}$ as:
%    \begin{eqnarray}
%        \widetilde{\Omega}^e := \bigcup_{j\in \piset{e}{}{\mathcal{T}_{\boldsymbol{\Omega}}}} \bigcup_{e'\in \peset{}{j}{\mathcal{T}_{\boldsymbol{\Omega}}}} \overline{\widehat{\Omega}^{e'}}.
%    \end{eqnarray}
\end{definition}

Define the mesh size associated to an element $\Omega^e = \bigtimes_{i=1}^n \left(\xi_{k^i,\ell}^i~,~ \xi^i_{k^i+1,\ell}\right)$, $e \in \mathcal{M}^{\act}_\ell$, as
\begin{equation}
	h^e := \max_i \left(\xi^i_{k^i+1,\ell} - \xi_{k^i,\ell}^i\right)\;.
\end{equation}
From Assumptions \ref{ass:bisection} and \ref{ass:MeshGrading} placed on the hierarchical mesh, and by definition of the projection elements, there exists a constant $C_{0}$, dependent only on the degree, such that the mesh size of a projection element is
\begin{equation}
	h^e \leq \widehat{h}^e \leq C_{0}h^e\;.
\end{equation}
Let the corresponding support extension $\widetilde{\Omega}^e$ be contained in the smallest bounding box $\bigtimes_{i=1}^n \left(\widetilde{\xi}^i_0~,~\widetilde{\xi}^i_1\right)$.
Then, the mesh size for $\widetilde{\Omega}^e$ is defined as
\begin{equation}
	\widetilde{h}^e := \max_i \left(\widetilde{\xi}^i_{1} - \widetilde{\xi}_{0}^i\right)\;.
\end{equation}
Again, there is a constant, $C_{1}$, independent of $h^e$, such that
\begin{equation}
	h^e \leq \widetilde{h}^e \leq C_{1} h^e\;.
\end{equation}
Using the above, we now state the following local error estimates for the \Bezier{} projector.
\begin{theorem}
\label{thm:LocalTHBsplineProjEst}
%    Let $\mathbb{T}_{\boldsymbol{\Omega}}$ be a THB-spline space of degree $\vec{p}$ conforming to Assumption \ref{ass:MeshGrading}, \ref{ass:wide-refinements} and \ref{ass:unique-projection-elements}. 
    For $e\in \mathcal{M}_\ell^{\act}$, $0\leq k \leq m \leq \min(\vec{p})+1$ and $f\in H^m(\widetilde{\Omega}^{e})$:
    \begin{eqnarray}
        \vert f - \Pi f \vert_{H^k({\Omega}^{e})} \leq C (h^e)^{m-k}\vert f \vert_{H^m(\widetilde{\Omega}^{e})}\;,
    \end{eqnarray}
	where $C$ is a constant independent of the mesh size $h^e$.
\end{theorem}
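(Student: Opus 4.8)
The plan is to follow the standard Bramble--Hilbert / Dupont--Scott strategy for establishing optimal local error estimates for quasi-interpolation-type operators, adapted to the two-stage structure of the \Bezier{} projector. First I would reduce the statement on $\Omega^e$ to a statement involving the operator restricted to the relevant patch. Because $\Pi f|_{\Omega^e}$ depends on $f$ only through the local projections $\widehat{\Pi}^{e'}_1\widehat{\Pi}^{e'}_0 f$ on the finitely many projection elements $\widehat{\Omega}^{e'}$ with $e' \in \peset{}{j}{\mathcal{T}_{\boldsymbol{\Omega}}}$ for some $j \in \iset{e}{}{\mathcal{T}_{\boldsymbol{\Omega}}}$, and the union of the closures of these is exactly $\widetilde{\Omega}^e$, the whole computation of $\Pi f$ on $\Omega^e$ is determined by $f|_{\widetilde{\Omega}^e}$. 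Moreover, by Assumption \ref{ass:MeshGrading} and the definition of projection elements, the number of mesh elements and projection elements involved is bounded by a constant depending only on $\vec{p}$.

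Second, I would establish that $\Pi$ is a local projector in the sense that it reproduces the target exactly when $f$ is already a THB-spline: if $g \in \mathbb{T}_{\boldsymbol{\Omega}}$, then on each $\widehat{\Omega}^{e'}$ the first-stage projection $\widehat{\Pi}^{e'}_0 g$ lands in a space containing $\widehat{\mathbb{T}}^{e'}_{\boldsymbol{\Omega}}$ (true for Choices 2 and 3) and the second-stage $L^2$-projection onto the linearly independent restricted THB-splines returns exactly the local coefficients of $g$; the weighted averaging with weights summing to one then returns the global coefficients of $g$. In particular, $\Pi$ reproduces $\mathbb{P}_{\vec{p}}(\widetilde{\Omega}^e)$ on $\Omega^e$ whenever a global polynomial of degree $\vec{p}$ restricted to $\widetilde{\Omega}^e$ happens to lie in $\mathbb{T}_{\boldsymbol{\Omega}}$ locally — and here I would use the fact that on the support extension, the THB-spline space restricted to the relevant elements reproduces $\mathbb{P}_{\vec{p}}$. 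The key consequence is $\Pi q = q$ on $\Omega^e$ for all $q \in \mathbb{P}_{m-1}(\widetilde{\Omega}^e) \subseteq \mathbb{P}_{\vec{p}}(\widetilde{\Omega}^e)$, since $m - 1 \leq \min(\vec{p})$.

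Third, I would establish the $H^k$-stability bound $\vert \Pi f \vert_{H^k(\Omega^e)} \leq C\, h^{-k}\, \Vert f \Vert_{L^2(\widetilde{\Omega}^e)}$ with $C$ independent of the mesh size. This proceeds by: (i) $L^2$-stability of each local projector $\widehat{\Pi}^{e'}_0$ and $\widehat{\Pi}^{e'}_1$ (both are $L^2$-orthogonal projections, hence have norm one), giving control of the coefficients $\hat c^{e'}_j$ in terms of $\Vert f \Vert_{L^2(\widehat{\Omega}^{e'})}$ once one invokes a scaled inverse/stability estimate for the restricted THB-spline basis — this is where the remark that the matrices $\widehat{\mathrm{D}}^e$ are invariant under scaling of the mesh is essential, as it makes the relevant condition numbers depend only on $\vec{p}$; (ii) the averaging step is a convex combination, so $\vert c_j \vert \leq \max_{e'} \vert \hat c^{e'}_j \vert$; (iii) a standard inverse inequality for THB-splines on $\Omega^e$, $\vert T_j \vert_{H^k(\Omega^e)} \leq C h^{-k} \Vert T_j \Vert_{L^2(\Omega^e)}$, together with the $L^\infty$-to-$L^2$ scaling $\Vert T_j \Vert_{L^2(\Omega^e)} \leq C (h^e)^{n/2}$ and a bound on $\vert c_j \vert$ in terms of $\Vert f \Vert_{L^2}$ via local $L^2$-stability of the one-to-one correspondence between local THB coefficients and local $L^2$-norm. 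Then, for arbitrary $q \in \mathbb{P}_{m-1}(\widetilde{\Omega}^e)$, write $f - \Pi f = (f - q) - \Pi(f - q)$ on $\Omega^e$ using the reproduction property, apply the stability bound to $\Pi(f-q)$, and use $\vert f - q\vert_{H^k} \le \Vert f - q\Vert_{H^m\text{-seminorm sense}}$ trivially; taking the infimum over $q$ and invoking the Bramble--Hilbert lemma on the patch $\widetilde{\Omega}^e$ (whose shape-regularity is uniform by Assumptions \ref{ass:bisection}--\ref{ass:MeshGrading} and the constants $C_0, C_1$) yields $\inf_q \Vert f - q \Vert_{H^k(\widetilde{\Omega}^e)} \le C (\widetilde h^e)^{m-k} \vert f \vert_{H^m(\widetilde{\Omega}^e)} \le C (h^e)^{m-k}\vert f\vert_{H^m(\widetilde{\Omega}^e)}$. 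Combining gives the claim.

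\medskip

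\noindent\textbf{Main obstacle.} The delicate point is step three: proving the mesh-size-independent $H^k$-stability of $\Pi$, because the restricted THB-spline basis on a projection element, although linearly independent by Theorem \ref{thm:non-overloaded-projection-element}, is not $L^2$-orthogonal, and its Gram matrix's smallest eigenvalue must be bounded below by a constant depending only on $\vec{p}$ (after scaling out $h$). This requires a compactness/finite-dimensionality argument over the finitely many combinatorial types of projection elements — their number being bounded in terms of $\vec p$ by Assumption \ref{ass:MeshGrading} and the bisection structure — so that one gets a uniform bound. The bookkeeping of which THB-splines contribute to $\Omega^e$ through which projection elements, and checking that $\widetilde\Omega^e$ indeed contains all of them, is the other place where care is needed but is essentially definitional.
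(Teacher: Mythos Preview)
Your proposal is correct and follows essentially the same approach as the paper: establish spline (hence polynomial) reproduction and local $L^2$-stability of $\Pi$, then combine these with an inverse inequality and a polynomial approximation / Bramble--Hilbert argument on the support extension $\widetilde{\Omega}^e$. The paper factors the argument slightly differently---it proves only the $L^2$-stability $\Vert\Pi f\Vert_{L^2(\Omega^e)}\leq C_{\text{stab}}\Vert f\Vert_{L^2(\widetilde{\Omega}^e)}$ in a separate lemma and then applies the polynomial inverse inequality to $\Pi(f-f_h)|_{\Omega^e}$ inside the theorem, rather than bundling the inverse inequality into an $H^k$-stability bound as you do---but the ingredients and logic are the same. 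Your ``main obstacle'' (a uniform lower bound on the smallest eigenvalue of the local Gram matrix) is precisely what the paper addresses by asserting that the matrices $\widehat{\mathrm{D}}^e$ are mesh-size independent; the paper does not spell out the compactness-over-finitely-many-configurations argument you sketch, but simply takes this as a stated property of the local projection choices.
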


The proof for this theorem is an adaptation of the proof given in \cite{Thomas2015} and relies on standard polynomial approximation estimates, and spline-reproduction and stability of the projection $\Pi$.

\begin{lemma}
\label{lem:LocStabPresSpline}
    The projector $\Pi$ has the following properties:
    \begin{itemize}[leftmargin=0.28in]
        \item $\Pi f = f$ for $f\in \mathbb{T}_{\boldsymbol{\Omega}}$, \hfill \textit{(spline reproduction)}
        \item $\Vert\Pi f\Vert_{L^2(\Omega^e)} \leq C_{\text{stab}}\Vert f\Vert_{L^2(\widetilde{\Omega}^{e})}$ for $f\in L^2(\widetilde{\Omega}^{e})$,\hfill \textit{(local stability)}
    \end{itemize}
    where $C_{\text{stab}}$ is independent of the mesh size $h^e$.
\end{lemma}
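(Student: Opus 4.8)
The plan is to verify the two properties separately, handling spline reproduction first since it drives the stability argument. For spline reproduction, take $f \in \mathbb{T}_{\boldsymbol{\Omega}}$. On each projection element $\widehat{\Omega}^e$, the restriction $f|_{\widehat{\Omega}^e}$ lies in $\widehat{\mathbb{T}}^e_{\boldsymbol{\Omega}}$; one checks that for each of the admissible choices of $\mathbb{V}(\widehat{\Omega}^e)$ in Section~\ref{sec:local-space-choices} we have $\widehat{\mathbb{T}}^e_{\boldsymbol{\Omega}} \subseteq \mathbb{V}(\widehat{\Omega}^e)$ (for Choice~2 with equality, for Choice~3 with strict containment), so the first local projection $\widehat{\Pi}^e_0$ reproduces $f|_{\widehat{\Omega}^e}$ exactly, and then $\widehat{\Pi}^e_1$ — being an $L^2$-projection onto $\widehat{\mathbb{T}}^e_{\boldsymbol{\Omega}}$ — also fixes it. Hence $\widehat{\Pi}^e_1\widehat{\Pi}^e_0 f = f|_{\widehat{\Omega}^e}$, which means the local coefficients satisfy $\hat{c}^e_j = c_j$ for the (unique, since $\widehat{\Omega}^e$ is not overloaded by Theorem~\ref{thm:non-overloaded-projection-element}) expansion of $f$ in THB-splines. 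The weighted average \eqref{eq:thb-spline-global-smoothing} then returns $c_j$ because the weights sum to one over $\peset{}{j}{\mathcal{T}_{\boldsymbol{\Omega}}}$, and therefore $\Pi f = \sum_j c_j T_j = f$.

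For local stability, fix $e \in \mathcal{M}^{\act}_\ell$ and expand $\Pi f|_{\Omega^e} = \sum_{j \in \iset{e}{}{\mathcal{T}_{\boldsymbol{\Omega}}}} c_j T_j|_{\Omega^e}$, with each $c_j$ a convex combination of the $\hat{c}^{e'}_j$ over $e' \in \peset{}{j}{\mathcal{T}_{\boldsymbol{\Omega}}}$. The strategy is the usual chain of bounds: first bound $|c_j|$ by $\max_{e'} |\hat{c}^{e'}_j|$ using the convexity of the weights; next bound each $|\hat{c}^{e'}_j|$ by $C \, \widehat{h}^{e'-?}\,\|\widehat{\Pi}^{e'}_1\widehat{\Pi}^{e'}_0 f\|_{L^2(\widehat{\Omega}^{e'})}$, i.e. by the norm of the local projection, via an inverse-type estimate relating a basis coefficient to the $L^2$-norm of the function it represents on the non-overloaded element $\widehat{\Omega}^{e'}$ — here the key point is that the constant is scale-invariant because the matrices $\widehat{\mathrm{D}}^{e'}$ are mesh-size independent (Section~\ref{sec:local-space-choices}) and the Gram matrices of normalized THB-splines on $\widehat{\Omega}^{e'}$ are uniformly well-conditioned; then bound $\|\widehat{\Pi}^{e'}_1\widehat{\Pi}^{e'}_0 f\|_{L^2(\widehat{\Omega}^{e'})} \le \|f\|_{L^2(\widehat{\Omega}^{e'})}$ since both steps are $L^2$-projections (norm non-increasing), and $\widehat{\Omega}^{e'} \subseteq \widetilde{\Omega}^e$ by definition of the support extension. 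Finally reassemble: $\|\Pi f\|_{L^2(\Omega^e)} \le \sum_j |c_j| \, \|T_j\|_{L^2(\Omega^e)} \le (\#\iset{e}{}{\mathcal{T}_{\boldsymbol{\Omega}}}) \max_j |c_j| \, \max_j \|T_j\|_{L^2(\Omega^e)}$, and control the cardinality of $\iset{e}{}{\mathcal{T}_{\boldsymbol{\Omega}}}$ by a degree-only constant (Assumption~\ref{ass:MeshGrading} limits contributions to two levels) and $\|T_j\|_{L^2(\Omega^e)}$ by $C (h^e)^{n/2}$ while the coefficient-to-norm estimate contributed a compensating $(h^e)^{-n/2}$; the powers of $h^e$ cancel and leave a constant depending only on $\vec{p}$ and the grading constants $C_0, C_1$.

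The main obstacle is the scale-invariant coefficient bound $|\hat{c}^{e'}_j| \lesssim (\widehat{h}^{e'})^{-n/2}\|\widehat{\Pi}^{e'}_1\widehat{\Pi}^{e'}_0 f\|_{L^2(\widehat{\Omega}^{e'})}$: one must argue that the constant does not blow up as the mesh is refined, which is not automatic because $\widehat{\Omega}^{e'}$ changes shape from level to level. The clean way around this is to note that, up to the affine rescaling that maps $\widehat{\Omega}^{e'}$ to a unit-size reference configuration, there are only finitely many combinatorial "types" of projection element (governed by $\vec{n}^{e'}_\ell$, the $d^{i,e'}_\ell$ pattern, and the two-level structure from Assumption~\ref{ass:MeshGrading}), so the relevant conditioning constant is a maximum over a finite set and hence finite; the mesh-independence of $\widehat{\mathrm{D}}^{e'}$ asserted in Section~\ref{sec:local-space-choices} is exactly the statement that makes this rescaling argument go through. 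Everything else is the routine plug-and-chug of polynomial/spline approximation theory, so I would state the coefficient estimate as a short sublemma (or cite the analogous step in \cite{Thomas2015}) and keep the write-up focused on the reduction to the finitely-many-types observation.
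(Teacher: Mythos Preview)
Your proposal is correct and follows essentially the same route as the paper's proof: spline reproduction by checking that Choices~2 and~3 preserve THB-splines at each local step, and stability via the chain coefficient-bound $\to$ mesh-independence of $\widehat{\mathrm{D}}^e$ $\to$ $L^2$-projection estimate $\to$ scaling. The only cosmetic difference is that the paper exploits the partition-of-unity property $\sum_j T_j = 1$ to obtain a pointwise bound $|\Pi f|_{\Omega^e}| \le \max_{j,e'}|\hat{c}^{e'}_j|$ before integrating, rather than bounding each $\|T_j\|_{L^2(\Omega^e)}$ separately as you do, which slightly shortens the bookkeeping.
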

\begin{proof}
    Spline preservation follows trivially by construction since every step (and with Choices 2 and 3 for the local spaces) perfectly preserves the piecewise-polynomial representation of the THB-spline.\footnote{Choice 1 will instead preserve the polynomials of $\mathbb{\Omega}$.} For local stability, observe that on $\Omega^e$:
    \begin{eqnarray*}
        \Pi f|_{\Omega^e} = \sum_{j\in \iset{e}{}{\mathcal{T}_{\boldsymbol{\Omega}}}}\left(\sum_{e \in \peset{}{j}{\mathcal{T}_{\boldsymbol{\Omega}}}}\widehat{\omega}_j^e \hat{c}^e_j \right)T_j\;.
    \end{eqnarray*}
	Then,
    \begin{equation}
    	\begin{split}
    		\bigg\lvert\Pi f|_{\Omega^e}\bigg\rvert
    		\leq \max_{j\in \iset{e}{}{\mathcal{T}_{\boldsymbol{\Omega}}}}\left\lvert\sum_{e' \in \peset{}{j}{\mathcal{T}_{\boldsymbol{\Omega}}}}\widehat{\omega}_j^{e'} \hat{c}^{e'}_j \right\rvert \sum_{j\in \iset{e}{}{\mathcal{T}_{\boldsymbol{\Omega}}}}T_j
    		\leq \max_{
    			\substack{
    				j \in \iset{e}{}{\mathcal{T}_{\boldsymbol{\Omega}}}\\
    				e' \in \peset{}{j}{\mathcal{T}_{\boldsymbol{\Omega}}}
    			}}
    			\left\lvert\hat{c}^{e'}_j \right\rvert\;,
    	\end{split}
    \end{equation}
	where we have used partition-of-unity of THB-splines.
    For any given $e$, $\hat{c}_j^e$ are calculated using \eqref{eq:local-projection-final-thb} and, since the matrix $\widehat{\mathrm{D}}^e$ is mesh size independent, $\| \widehat{\mathrm{D}}^e\|_\infty$ is mesh-size independent. Moreover, the values $\hat{f}_j^e$ in \eqref{eq:local-projection-prelim-thb} computed via an $L^2$-projection satisfy
    \begin{eqnarray}
        \max_{k}|\hat{f}_k^e| 
        \leq \frac{C}{\sqrt{\left(\hat{h}^e\right)^n}}\Vert f \Vert_{L^2(\widehat{\Omega}^e)}
        \leq \frac{C}{\sqrt{(h^e)^n}}\Vert f \Vert_{L^2(\widehat{\Omega}^e)}\;.
   \end{eqnarray}
   Combining the above and integrating over $\Omega^e$:
    \begin{eqnarray*}
        \Vert \Pi f \Vert_{L^2(\Omega^{e})}
        \leq C_{\text{stab}} \Vert f \Vert_{L^2(\widetilde{\Omega}^e)}\;,
    \end{eqnarray*}
	for a constant $C_{\text{stab}}$ that is independent of $h^e$.
\end{proof}

\begin{proof}[Theorem \ref{thm:LocalTHBsplineProjEst}]
    For any polynomial $f_h \in \mathbb{P}_{\vec{p}}(\widetilde{\Omega}^e)$,
    \begin{eqnarray*}
        \vert f - \Pi f \vert_{H^k(\Omega^e)} &=& \vert f - f_h + f_h - \Pi f \vert_{H^k(\Omega^e)}\\
        &\leq& \vert f - f_h \vert_{H^k(\Omega^e)} + \vert \Pi (f-f_h) \vert_{H^k(\Omega^e)}\;,\\
        &\leq& \vert f - f_h \vert_{H^k(\widetilde{\Omega}^e)} + C_{\text{inv}}C_{\text{stab}}(h^e)^{-k}\Vert f-f_h \Vert_{L^2(\widetilde{\Omega}^e)}\;,
    \end{eqnarray*}
    where we have used the inverse inequality for polynomials and local stability of $\Pi$ from Lemma \ref{lem:LocStabPresSpline}. Consequently, using standard polynomial approximation estimates for both terms, we obtain the desired claim for a constant $C$ that depends on $C_{\text{pol}}$, $C_{\text{stab}}$, $C_{\text{inv}}$ and the polynomial degree.
\end{proof}

\section{Numerical results}
\label{sec:NumRes}
We perform several numerical experiments to validate our findings. Here we limit ourselves to the two dimensional THB-spline spaces of various polynomial degrees. First, the local estimate from Theorem \ref{thm:LocalTHBsplineProjEst} is numerically validated.
Next, we perform tests to compare the proposed \Bezier{} projection to \cite{giust_local_2020}.
Finally, for all numerical tests, we adopt Choice 3 from Section \ref{sec:local-space-choices}.

\subsection{Verification of error estimates}
To validate the error estimates of Theorem \ref{thm:LocalTHBsplineProjEst}, the target function $f(x,y) = \sin(\pi x)\sin(\pi y)$ is projected onto $\mathbb{T}_{\boldsymbol{\Omega}}$ with $L = 2$ levels, where
\begin{equation}\label{eq:local-estimates-numerical-problem}
	\Omega_1 := \overline{\Omega}\;,\qquad \Omega_2 := \{(x,y)\in \Omega_1:x\geq 0.5,y\geq 0.5\}.
\end{equation}
On the above domain hierarchy, THB-spline spaces $\mathbb{T}_{\boldsymbol{\Omega}}$ are constructed for each $\vec{p} \in \{(p,p)~:~p = 1, \dots, 5 \}$. The coarsest mesh consists of $2\times 2$ elements, and the subsequent refinements are built by bisecting the meshes at all levels.
The results can be seen in Figure \ref{fig:NumResConvergence}, the error converges optimally w.r.t. the mesh size.
We do note that the stagnation for $p = 5$ is likely caused by the conditioning issues associated with the least-squares problem that is implied by Choice 3 from Section \ref{sec:local-space-choices}.
\begin{figure}[t]
    \sidecaption[t]
    % \centering
    \includegraphics[width = 0.6\textwidth]{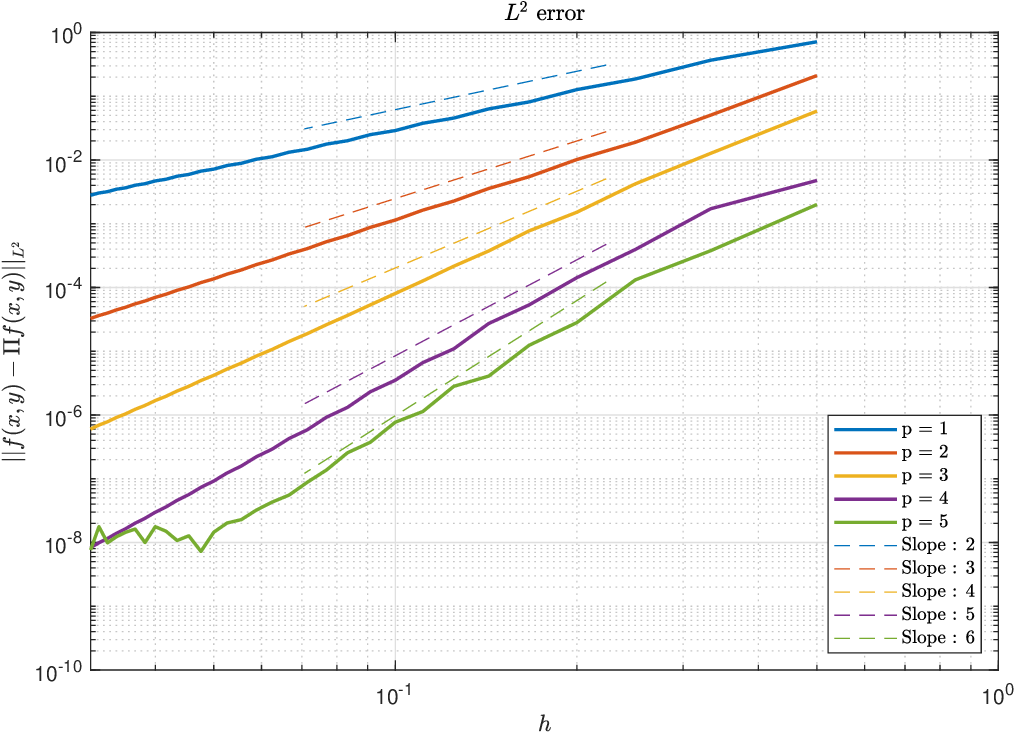}
    \caption{Numerical Convergence rates for projection of $f(x,y) = \sin(\pi x)\sin(\pi y)$ onto the THB-spline space built for the domain hierarchy from \eqref{eq:local-estimates-numerical-problem}. The numerical rates (solid lines) are compared to Theorem \ref{thm:LocalTHBsplineProjEst} (dashed lines).}
    \label{fig:NumResConvergence}
\end{figure}

\subsection{Adaptive refinement tests}
In this section, we compare our projector to the one from \cite{giust_local_2020} on an adaptive refinement test borrowed from the latter.
To do this, we require an adaptive refinement scheme that ensures that the hierarchical refinements satisfy all assumptions required for the formulation of the \Bezier{} projector.
We propose one such adaptive refinement scheme for quadratic and cubic THB-splines for the purpose of conducting this comparison.
Note that the refinement scheme can certainly be improved, however it's not the focus of our paper.

Nonetheless, the numerical results indicate that the scheme shows comparable performance in relation to the results from \cite{giust_local_2020}. For practical purposes, a first implementation of the refinement scheme was formulated for the restricted setting of quadratic and cubic THB-splines, and for a stronger set of assumptions that automatically imply Assumptions \ref{ass:wide-refinements} and \ref{ass:unique-projection-elements}.

\paragraph{\textbf{New assumptions on the mesh}}
The first new assumption is borrowed from \cite{vuong_hierarchical_2011} and it states that the refinement domains $\Omega_\ell$ must be equal to the union of supports of a subset of level-$(\ell-1)$ splines.
\begin{assumpBox}
	\label{ass:RefDomSplSupp}
	For all $\ell > 1$, $\exists S\subset \mathcal{B}_{\ell-1}$ such that
	\begin{eqnarray}
		\Omega_{\ell} := \bigcup_{B_{\vec{j},\ell-1}\in S} \supp\left(B_{\vec{j},\ell-1}\right), \quad \ell = 2,\dots,L.
	\end{eqnarray}
\end{assumpBox}
Assumption \ref{ass:RefDomSplSupp} was originally introduced so that the HB-spline functions  can represent unity with strictly positive coefficients $w_{\vec{j},\ell} > 0$,
\begin{eqnarray}
    \sum_{(\vec{j},\ell) : B_{\vec{j},\ell}\in \mathcal{H}_{\boldsymbol{\Omega}}} w_{\vec{j},\ell} B_{\vec{j},\ell} = 1\;.
\end{eqnarray}

The second assumption is introduced in \cite{mokris_completeness_2014}. For this, we introduce the open complementary region:
\begin{eqnarray}
    \Omega_{\ell+1}^c := \Omega \backslash \Omega_{\ell+1}\;.
\end{eqnarray}
\begin{assumpBox}
\label{ass:SplSuppSimpleConnected}
For each level $\ell = 1,\dots,L-1$, and for any B-spline basis function $B_{\vec{j},\ell}\in \mathcal{B}_{\ell}$, the overlap defined as:
\begin{eqnarray}\label{eq:overlap}
    \overlap\left(B_{\vec{j},\ell}\right):= \supp\left(B_{\vec{j},\ell}\right) \cap \Omega_{\ell+1}^c,
\end{eqnarray}
is connected and simply connected.
\end{assumpBox}
Assumption \ref{ass:SplSuppSimpleConnected} is used in \cite{mokris_completeness_2014} to show that Hierarchical B-spline spaces are linearly independent and contains the space of piecewise polynomials over the same mesh.

The above two assumptions are sufficient for quadratic THB-splines. For the cubic case, we impose the following additional assumption on the refinement domains $\Omega_\ell$.
% \noindent\begin{minipage}{\textwidth}
% \begin{svgraybox}
% \begin{assumption}
% \label{ass:projection-elements-no-overlapping}
% For the cubic case, $\Omega_\ell$ consists solely of elements $e\in \mathcal{M}_{\ell-2}$ for $\ell = 2,\dots,L$:
% \begin{equation}
%     \Omega_{\ell} = \bigcup_{e\in E} \widebar{\Omega}^e
% \end{equation}
% Where $E\subset \mathcal{M}_{\ell-2}$. For $\ell-2<1$, we use the spline spaces introduced in Definition \ref{def:coarsened-B-spline-spaces}.
% \end{assumption}
% \end{svgraybox}
% \end{minipage}

\begin{assumpBox}
\label{ass:projection-elements-no-overlapping}
When either $p^1 = 3$ or $p^2 = 3$, and for each $\ell = 2,\dots,L$, $\Omega_\ell$ can be described as the following unions:
\begin{itemize}[leftmargin=0.28in]
	\item if $p^1 = 3$ and $p^2 \leq 2$, then $\exists K^1_\ell \subset \NN^2$ such that:
	\begin{eqnarray}
		\Omega_\ell &=& \bigcup_{(k^1,k^2)\in K^1_\ell} \left[\xi^1_{2k^1,\ell-1},\xi^1_{2k^1+2,\ell-1}\right]\times \left[\xi^2_{k^2,\ell-1},\xi^2_{k^2+1,\ell-1}\right];
	\end{eqnarray}
	\item if $p^1 \leq 2$ and $p^2 = 3$, then $\exists K^2_\ell \subset \NN^2$ such that:
	\begin{eqnarray}
		\Omega_\ell &=& \bigcup_{(k^1,k^2)\in K^2_\ell} \left[\xi^1_{k^1,\ell-1},\xi^1_{k^1+1,\ell-1}\right]\times \left[\xi^2_{2k^2,\ell-1},\xi^2_{2k^2+2,\ell-1}\right]\;,
	\end{eqnarray}
	\item if $p^1 = p^2 = 3$, then $\exists K^3_\ell \subset \NN^2$ such that:
	\begin{eqnarray}
		\Omega_\ell &=& \bigcup_{(k^1,k^2)\in K^3_\ell} \left[\xi^1_{2k^1,\ell-1},\xi^1_{2k^1+2,\ell-1}\right]\times \left[\xi^2_{2k^2,\ell-1},\xi^2_{2k^2+2,\ell-1}\right].
	\end{eqnarray}
\end{itemize}
\end{assumpBox}

The following results, proved in the Appendix, show that these assumptions imply Assumptions \ref{ass:wide-refinements} and \ref{ass:unique-projection-elements} required for formulating the \Bezier{} projector.

\begin{proposition}\label{prop:corAss2}
	Assumptions \ref{ass:RefDomSplSupp}, \ref{ass:SplSuppSimpleConnected} and \ref{ass:projection-elements-no-overlapping} imply Assumptions \ref{ass:wide-refinements} and \ref{ass:unique-projection-elements} in two dimensions.
\end{proposition}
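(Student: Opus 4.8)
The plan is to prove the two implications separately, after first extracting a geometric description of the refinement domains from Assumptions \ref{ass:RefDomSplSupp} and \ref{ass:SplSuppSimpleConnected}. By Assumption \ref{ass:RefDomSplSupp}, every $\Omega_\ell$ is a union of supports of level-$(\ell-1)$ B-splines, and by the bisection rule of Assumption \ref{ass:bisection} each such support is an axis-aligned box that is $p^i+1$ knot spans wide in direction $i$ on the level-$(\ell-1)$ mesh, hence $2(p^i+1)$ knot spans wide on the level-$\ell$ mesh; since these supports are themselves unions of mesh cells, every mesh cell contained in $\Omega_\ell$ lies inside a single such support. Assumption \ref{ass:SplSuppSimpleConnected} then rules out $\Omega_{\ell+1}^c$ splitting such a support or leaving a hole inside it, so that $\partial\Omega_\ell\setminus\partial\Omega$ looks, locally around any active level-$\ell$ element touching it, like a single axis-aligned segment or an honest convex corner of $\Omega_\ell$, with no thin notches; in particular the numbers $d^{i,e}_\ell$ of Definition \ref{def:non-overloaded-border-element} are automatically $0$ or larger than $p^i$, so that every border element is well-behaved. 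When $p^i = 3$, Assumption \ref{ass:projection-elements-no-overlapping} additionally pins the relevant boxes, and hence the covering supports, to even-indexed level-$(\ell-1)$ knot lines in direction $i$.

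\textbf{Deriving Assumption \ref{ass:wide-refinements}.} Given a well-behaved border element $\Omega^e = \bigtimes_{i=1}^2(\xi^i_{k^i,\ell},\xi^i_{k^i+1,\ell})$ with direction $\vec{n}^e_\ell$, I would reflect coordinates so that $n^{i,e}_\ell \in \{0,1\}$ and argue one coordinate at a time. If $n^{i,e}_\ell = 1$, a facet of $\Omega^e$ at $\xi^i_{k^i,\ell}$ lies on $\partial\Omega_\ell\setminus\partial\Omega$; the level-$(\ell-1)$ B-spline support that contains $\Omega^e$ is contained in $\Omega_\ell$ and cannot cross that facet, so in direction $i$ it equals $[\xi^i_{k^i,\ell},\xi^i_{k^i+2(p^i+1),\ell}]$, which already contains the factor $(\xi^i_{k^i,\ell},\xi^i_{k^i+2p^i,\ell})$ of \eqref{eq:wide-refinement-domain}. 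If instead $n^{i,e}_\ell = 0$, then $d^{i,e}_\ell > p^i$ and a short argument shows that every translate of $\Omega^e$ by at most $p^i$ cells in coordinate $i$ alone still lies in $\Omega_\ell$ (otherwise the first such cell that leaves $\Omega_\ell$ would produce an $i$-normal boundary facet within $p^i$ translations of $\Omega^e$, contradicting $d^{i,e}_\ell > p^i$), which covers the symmetric factor $(\xi^i_{k^i-p^i,\ell},\xi^i_{k^i+p^i+1,\ell})$. The remaining work is to upgrade these one-dimensional statements into the containment of the full two-dimensional box of \eqref{eq:wide-refinement-domain}, which I would do by covering the box with a chain of level-$(\ell-1)$ B-spline supports inside $\Omega_\ell$: the connectedness in Assumption \ref{ass:SplSuppSimpleConnected} forces consecutive supports to overlap so that no corner of the box is missed, and the even-index alignment from Assumption \ref{ass:projection-elements-no-overlapping} keeps the chain coherent when $p^i = 3$.

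\textbf{Deriving Assumption \ref{ass:unique-projection-elements}.} Let $\Omega^e$, $e \in \mathcal{M}^{\act}_\ell$, admit a translate $\tau_{\vec{t}}(\Omega^e)$ with $|t^i| < p^i$ whose boundary meets $\partial\Omega_\ell\setminus\partial\Omega$. For existence, I would take the well-behaved border element $\Omega^{e^\ast}$ whose projection element is responsible for the piece of $\partial\Omega_\ell$ nearest to $\Omega^e$ (which exists by the first paragraph, with its direction pointing from that boundary piece into $\Omega_\ell$); the projection element $\widehat{\Omega}^{e^\ast}$ of Definition \ref{def:projection-element-boundary}, which extends $\Omega^{e^\ast}$ by $p^i-1$ cells in each refinement direction, then contains $\Omega^e$ precisely because $|t^i| < p^i$. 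For uniqueness, suppose $\Omega^e \subset \widehat{\Omega}^{e_1} \cap \widehat{\Omega}^{e_2}$ with $e_1 \neq e_2$ generating distinct projection elements; since $\widehat{\Omega}^{e_1}$ and $\widehat{\Omega}^{e_2}$ are attached to distinct boundary features of $\Omega_\ell$, such an overlap is exactly the configuration that Assumption \ref{ass:SplSuppSimpleConnected} excludes in the quadratic case and that Assumption \ref{ass:projection-elements-no-overlapping} excludes in the cubic case (hence its name). Finally, an element $\Omega^e$ admitting no such small translate is a well-behaved interior element (Definition \ref{def:projection-element-interior}), for which the projection element is $\Omega^e$ itself and uniqueness is immediate; together with the above this recovers the claim, used throughout Section \ref{sec:THBProj}, that each mesh element lies in a unique projection element.

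The step I expect to be the main obstacle is the two-dimensional gluing in the derivation of Assumption \ref{ass:wide-refinements} at a convex corner, where $\vec{n}^e_\ell = (1,1)$: one has to verify that the level-$(\ell-1)$ B-spline supports supplied by Assumption \ref{ass:RefDomSplSupp} genuinely tile the entire $2p^1 \times 2p^2$ box of \eqref{eq:wide-refinement-domain} with no corner uncovered, and that this tiling is compatible with the even-index alignment imposed by Assumption \ref{ass:projection-elements-no-overlapping} across the level-$(\ell-1)$-to-level-$\ell$ bisection. The same index bookkeeping is what makes the non-overlap argument in the uniqueness part delicate for $p^i = 3$, and is the reason a separate assumption is needed in the cubic case but not in the quadratic one.
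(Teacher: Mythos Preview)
Your overall architecture matches the paper's appendix proof---treat Assumption~\ref{ass:wide-refinements} via a level-$(\ell-1)$ B-spline support together with simple-connectedness, and treat Assumption~\ref{ass:unique-projection-elements} by first exhibiting a nearby well-behaved border element and then ruling out overlap of projection elements. But there is a genuine gap in your first paragraph that propagates into the existence step: the claim that under Assumptions~\ref{ass:RefDomSplSupp} and~\ref{ass:SplSuppSimpleConnected} ``the numbers $d^{i,e}_\ell$ \dots\ are automatically $0$ or larger than $p^i$, so that every border element is well-behaved'' is false. Take $p^1=p^2=2$ and let $\Omega_\ell$ be a single level-$(\ell-1)$ B-spline support (a $3\times 3$ block of level-$(\ell-1)$ cells, hence $6\times 6$ level-$\ell$ cells) interior to $\Omega$; this satisfies Assumptions~\ref{ass:RefDomSplSupp}--\ref{ass:SplSuppSimpleConnected}. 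The level-$\ell$ cell one step in from a corner along the bottom edge has $d^{2,e}_\ell=0$ but $d^{1,e}_\ell=1$, so it is a border element that is \emph{not} well-behaved. The paper's proof confronts exactly this situation: in its existence argument it first lands on a candidate $\Omega^{e^\ast}$ with $d^{2,e^\ast}_\ell=0$, checks whether $d^{1,e^\ast}_\ell\in\{0\}\cup(p^1,\infty)$, and if not performs a lateral step to the corner element $\Omega^e$ with $d^{1,e}_\ell=d^{2,e}_\ell=0$, invoking Assumption~\ref{ass:SplSuppSimpleConnected} to certify that this corner really is a well-behaved border element (see the paper's Figure~\ref{fig:example4}). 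Your proposal skips this sidestep entirely by assuming it is never needed.

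Two smaller discrepancies are worth flagging. For Assumption~\ref{ass:wide-refinements} the paper does not build a ``chain'' of supports: a \emph{single} level-$(\ell-1)$ B-spline support, anchored at the boundary feature, together with Assumption~\ref{ass:SplSuppSimpleConnected} applied to that one spline, already forces the entire box of~\eqref{eq:wide-refinement-domain} into $\Omega_\ell$; so the corner-gluing difficulty you anticipate does not arise. For uniqueness, the paper's case split is not ``quadratic via Assumption~\ref{ass:SplSuppSimpleConnected}, cubic via Assumption~\ref{ass:projection-elements-no-overlapping}'' as you suggest; rather it separates projection elements of the same type (both corner-type or both edge-type), where Assumption~\ref{ass:RefDomSplSupp} alone prevents overlap, from the mixed case, which is handled using the already-established Assumption~\ref{ass:wide-refinements}. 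Assumption~\ref{ass:projection-elements-no-overlapping} enters earlier, in the existence step, to force $d^{i,e}_\ell>p^i$ when both components of the minimal translation are nonzero.
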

\begin{proof}
	The proof is shown in Section \ref{sec:new-mesh-assumptions} appended to the end of the paper.
\end{proof}

\paragraph{\textbf{Adaptive refinement scheme}}
We now propose a simple refinement scheme; Algorithm \ref{alg:MainCodeAdapRef} contains the corresponding pseudo-code.
This algorithm consists of four main routines:
\begin{itemize}
    \item \FuncSty{Project} : builds THB-splines on the given mesh and applies the projector described in Section \ref{sec:THBProj};
    \item \FuncSty{ElemError} : returns the maximum error for every mesh element;
    \item \FuncSty{MarkElem} : marks a proportion of mesh elements for refinement based on D\"orfler marking \cite{dorfler_convergent_1996} and $0<\theta <1$;
    \item \FuncSty{ConformMesh} : updates the set of marked elements so that Assumptions \ref{ass:MeshGrading},\ref{ass:RefDomSplSupp}, \ref{ass:SplSuppSimpleConnected} and \ref{ass:projection-elements-no-overlapping} are satisfied.
\end{itemize}
\IncMargin{1em}
\begin{algorithm}
\SetKwData{Mesh}{mesh}
\SetKwData{f}{f}
\SetKwData{Proj}{proj}
\SetKwData{MarkedElem}{marked\_elem}
\SetKwData{ElemError}{elem\_error}
\SetKwData{MaxError}{max\_error}
\SetKwFunction{Project}{Project}
\SetKwFunction{CalculateMaxElementError}{ElemError}
\SetKwFunction{MarkRefinementElements}{MarkElem}
\SetKwFunction{ConformMeshToAssumptions}{ConformMesh}

\SetKwInOut{Input}{input}\SetKwInOut{Output}{output}
\Input{Target function $f\in L^2(\Omega)$, starting mesh $\Mesh$, target max elem error $\varepsilon > 0$ and constant $0<\theta< 1$. }
\Output{A projection $\Proj$ whos max element error is less than $\varepsilon$. }
\BlankLine
 {$\Proj \leftarrow \Project(f, \Mesh)$}\;
 {$\ElemError \leftarrow \CalculateMaxElementError(f, \Proj, \Mesh)$}\;
 % {$\MaxError \leftarrow \max(\ElemError)$}\;
\While{$\max(\ElemError) > \varepsilon$}{
 $ \MarkedElem \leftarrow \MarkRefinementElements(\ElemError, \theta)$\;
     $\Mesh \leftarrow \ConformMeshToAssumptions(\MarkedElem, \Mesh)$\;
      $\Proj \leftarrow \Project(f, \Mesh)$\;
     $\ElemError \leftarrow \CalculateMaxElementError(f, \Proj, \Mesh)$\;
     % $\MaxError \leftarrow \max(\ElemError)$\;
}
\caption{Adaptive refinement scheme}
\label{alg:MainCodeAdapRef}
\end{algorithm}

The only non-standard routine here is \FuncSty{ConformMesh}. A pseudo-code for this is shown in Algorithm \ref{alg:ConformMeshToAssumptions} and consists of three routines:
\begin{itemize}
    \item \FuncSty{SupportCover} : for every level $\ell$, initializes $\Omega_\ell$ as a greedy union of supports of $B_{\vec{j},\ell-1}$ that cover the marked level-$\ell$ elements;
    \item \FuncSty{GradeMesh} : conforms mesh to admissibility class $1$ using \cite{bracco_refinement_2018};
    \item \FuncSty{ConnectedSupport} : for any $\ell$ and $B_{\vec{j},\ell-1}\in\mathcal{B}_{\ell-1}$ such that $\overlap(B_{\vec{j},\ell-1})$ is not simply connected, marks the support of $B_{\vec{j},\ell-1}$ for refinement.
\end{itemize}
In addition, for cubic THB-splines, each of these routines is made to conform to Assumption \ref{ass:projection-elements-no-overlapping}.
\IncMargin{1em}
\begin{algorithm}
\SetKwData{Mesh}{mesh}
\SetKwData{MeshNew}{mesh\_new}
\SetKwData{MarkedElem}{marked\_elem}
\SetKwData{Alter}{alter}
\SetKwFunction{SupportCover}{SupportCover}
\SetKwFunction{GradeMesh}{GradeMesh}
\SetKwFunction{SplineSupportConnectedness}{ConnectedSupport}
\SetKwInOut{Input}{input}\SetKwInOut{Output}{output}
\Input{$\Mesh$ to conform to Assumptions \ref{ass:MeshGrading}, \ref{ass:RefDomSplSupp}, \ref{ass:SplSuppSimpleConnected} and \ref{ass:projection-elements-no-overlapping} while adding $\MarkedElem$. }
\Output{$\Mesh$.}
\BlankLine
$\Mesh \gets \SupportCover(\MarkedElem, \Mesh)$\;
 $\Alter \gets$ true\;
\While{$\Alter$}{
     $\MeshNew \gets \GradeMesh(\Mesh)$\;
     $\MeshNew \gets \SplineSupportConnectedness(\MeshNew)$\;
    \If{$\MeshNew = \Mesh$}{
         $\Alter \gets$ false\;
    }
     $\Mesh \gets \MeshNew$\;
}
\caption{Schematic for \FuncSty{ConformMesh}}
\label{alg:ConformMeshToAssumptions}
\end{algorithm}

\paragraph{\textbf{Comparison with \cite{giust_local_2020}}}
The performance of our \Bezier{} projector with the proposed adaptive refinement scheme is compared to the local THB-spline projector introduced in \cite{giust_local_2020}. As a test case, we use the following target function:
\begin{equation}
%    \slabel{eq:AdapFunc-giust}g_{\text{Giust}}(x,y) =&1 - \tanh\left(\frac{\sqrt{x^2+y^2 - .3}}{0.05\sqrt{2}}\right), & (x,y)\in [-1,1]^2,\\
    \slabel{eq:AdapFunc}f(x,y) = 1 - \tanh\left(\frac{\sqrt{(2x-1)^2+(2y-1)^2 - 0.3}}{0.05\sqrt{2}}\right)\;, \quad (x,y)\in [0,1]^2\;.
\end{equation}
The same function was used in \cite[Example 1]{giust_local_2020} albeit by transforming it from $[0,1]^2$ to a scaled and translated domain $[-1,1]^2$.
This will not cause any problems for our comparison as we will measure the errors in the $L^\infty$ norm.

As in \cite{giust_local_2020}, the quadratic scheme is allowed to refine upto level $5$, while the cubic case refines up to level $4$. In Figure \ref{fig:AdaptiveRefinementP2} and Figure \ref{fig:AdaptiveRefinementP3}, the maximum element error for our refinement scheme can be seen for the quadratic and cubic case for various choices of $\theta$. The final refinement domain for the quadratic case and $\theta = 0.5$ can be seen in Figure \ref{fig:MeshP2Theta50}, and for the cubic case and $\theta=0.75$ in Figure \ref{fig:MeshP3Theta75}.

\begin{figure}[t]
\subfigures
    \includegraphics[width=.45\textwidth]{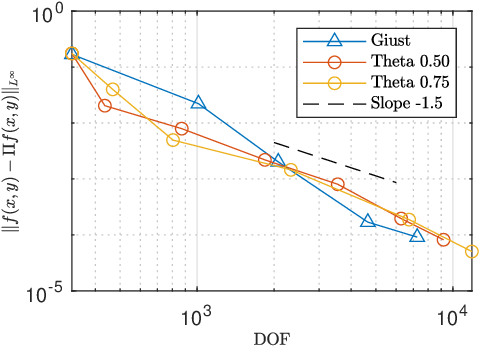}
    \hfill
    % \includegraphics[]{}
    % \hfill
    \includegraphics[width=.45\textwidth]{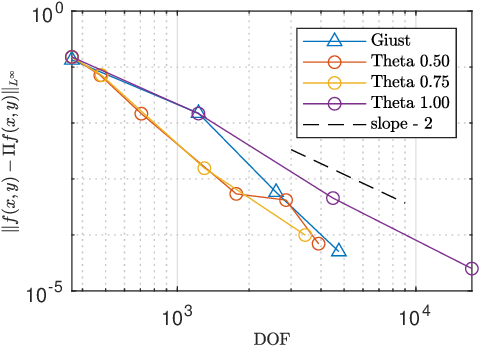}
    \leftcaption{Maximum element error for the quadratic adaptive refinement of equation \eqref{eq:AdapFunc}. \label{fig:AdaptiveRefinementP2}}
    \rightcaption{Maximum element error for the cubic adaptive refinement of equation \eqref{eq:AdapFunc}.\label{fig:AdaptiveRefinementP3}}
\end{figure}

\begin{figure}
\subfigures
    \includegraphics[width=.45\textwidth]{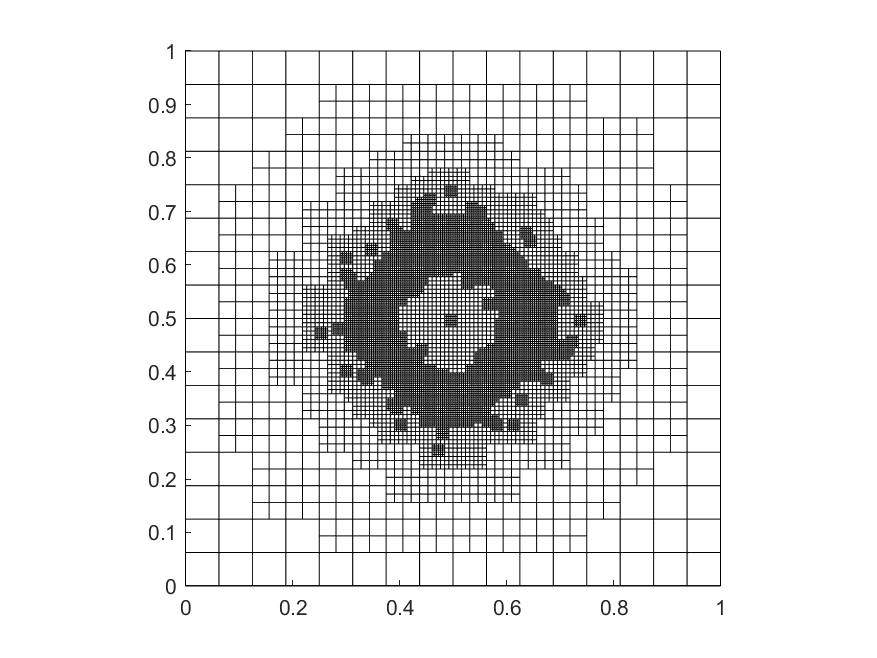}
    \hfill
    \includegraphics[width=.45\textwidth]{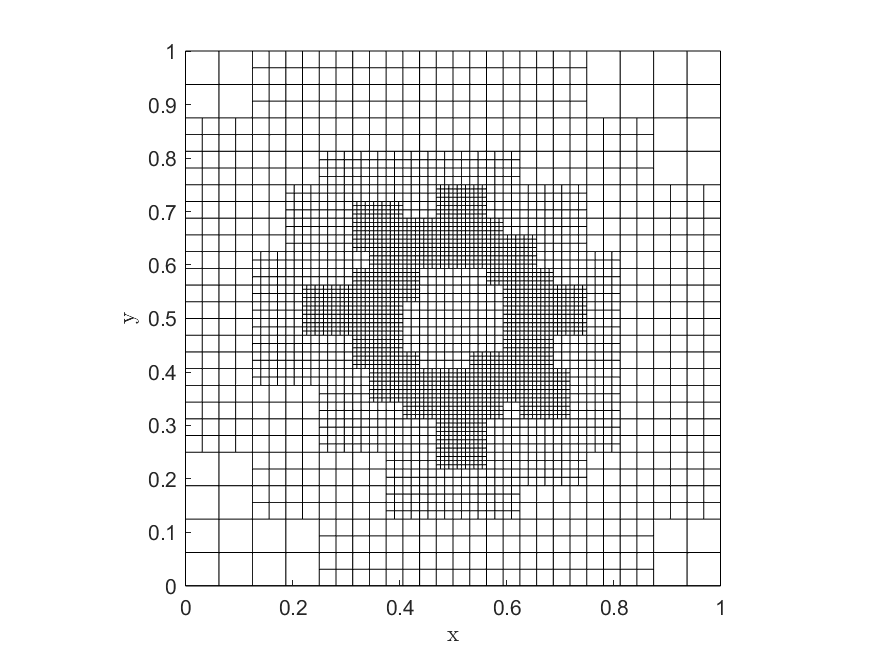}
    \leftcaption{Final quadratic mesh for $\theta =0.5$.\label{fig:MeshP2Theta50}}
    \rightcaption{Final cubic mesh for $\theta =0.75$.\label{fig:MeshP3Theta75}}
\end{figure}

\section{Conclusion}
% something projection elements
Truncated Hierarchical B-spline spaces suffer from overloading of mesh elements, i.e., there exist mesh elements restricted to which the basis functions are linearly dependent.
In this paper, we considered these spaces in an arbitrary number of spatial dimensions.
For hierarchically refined meshes that satisfy certain assumptions, we showed that we can identify the elements that are not overloaded.
Using those non-overloaded elements, we partitioned the entire mesh into a set of local macro-elements, none of which are overloaded.
These macro-elements, which we call projection elements, are local in the sense that they consists of adjacent elements, and the number of elements in each macro-element solely depends on the spline degree.

% something projector
Using these projection elements, we extended the \Bezier{} projector from \cite{Thomas2015} to the Truncated Hierarchical B-spline spaces. In particular, the notion of projection elements is useful for performing local projections onto the spline space restricted to those macro-elements.
These local projections can be chosen in different ways, and we outlined some simple choices that stay close to the original \Bezier{} projector from \cite{Thomas2015}.

% something on the numerical results
Finally, we derived optimal local error estimates for our proposed projector.
These estimates were numerically validated in a two-dimensional setting.
In addition, a first adaptive refinement scheme was proposed for quadratic and cubic splines that produces hierarchically refined meshes conforming to our assumptions.
Note that this refinement scheme was only proposed for performing numerical comparison with the results of \cite{giust_local_2020}; theoretical results (e.g., on its complexity) are outside the scope of this paper.
The results showed that the performance of the \Bezier{} projector and the proposed refinement scheme are comparable to that of \cite{giust_local_2020}.

% what next
There are several generalizations of our results that may be of interest in applications, we list a couple of them here.
First, we only considered hierarchical meshes of admissibility class $1$ \cite{bracco_refinement_2018}; extending the characterization of local linear independence to higher admissibility classes will allow one to work with more aggressive hierarchical refinements.
Another interesting open question is formulation of refinement schemes with linear complexity, simple implementations, that can yield meshes conforming to our assumptions, and that work for arbitrary choices of degrees in both 2D and 3D.
These and other questions will be the focus of our future work.

\bibliographystyle{plain} 
\bibliography{references}

\section{Appendix: Proof of Proposition \ref{prop:corAss2}}\label{sec:new-mesh-assumptions} 

\noindent
\textbf{Assumption \ref{ass:wide-refinements}}:\\
We first show that Assumptions \ref{ass:RefDomSplSupp} and \ref{ass:SplSuppSimpleConnected} imply Assumptions \ref{ass:wide-refinements} in two dimensions.
Here, there are three kinds of well-behaved border elements shown in the schematics below.
\begin{center}
	\begin{tikzpicture}[scale=0.7]
		\draw (0,0) grid (4,4);
		\draw[step=5mm] (1,1) grid (4,4);
		\filldraw[color=orange, opacity = 0.2] (1,1.5) rectangle++ (0.5,2.5);
		\filldraw[color=orange, opacity = 0.2] (1.5,1) rectangle++ (2.5,0.5);
		\filldraw[color=cyan, opacity = 0.5] (1,1) rectangle++ (0.5,0.5);
		\draw[color = red,line width = 1mm] (1,1.5) -- (1,1) -- (1.5,1);
		\draw[color = teal,line width = 1mm] (0,0) rectangle ++ (3,3);
		\draw[color = cyan,thick] (1,1) rectangle ++ (2,2);
		\node[align=center,scale=0.8] at (2,-0.7) {$d_\ell^1(e) = d_\ell^2(e) = 0$};
	\end{tikzpicture}
	\qquad
	\begin{tikzpicture}[scale=0.7]
		\draw (0,0) grid (4,4);
		\draw[step=5mm] (0,1) grid (4,4);
		\draw[step=5mm] (1,0) grid (4,4);
		\filldraw[color=orange, opacity = 0.2] (1,1) rectangle++ (-0.5,3);
		\filldraw[color=orange, opacity = 0.2] (1,1) rectangle++ (3,-0.5);
		\filldraw[color=cyan, opacity = 0.5] (1,1) rectangle++ (0.5,0.5);
		\draw[color = red,line width = 1mm] (1,0.5) -- (1,1) -- (0.5,1);
		\draw[color = teal,line width = 1mm] (0,0) rectangle ++ (3,3);
		\draw[color = cyan,thick] (1,1) rectangle ++ (2,2);
		\node[align=center,scale=0.8] at (2,-0.7) {$d_\ell^1(e)> p^1,~ d_\ell^2(e) > p^2$};
	\end{tikzpicture}
	\qquad
	\begin{tikzpicture}[scale=0.7]
		\draw (0,0) grid (4,4);
		\draw[step=5mm] (1,0) grid (4,4);
		\filldraw[color=orange, opacity = 0.2] (1,2) rectangle++ (3,-0.5);
		\filldraw[color=magenta, opacity = 0.2] (1,2) rectangle++ (0.5,1);
		\filldraw[color=magenta, opacity = 0.2] (1,1.5) rectangle++ (0.5,-1);
		\filldraw[color=cyan, opacity = 0.5] (1,1.5) rectangle++ (0.5,0.5);
		\draw[color = red,line width = 1mm] (1,1.5) -- (1,2);
		\draw[color = teal,line width = 1mm] (0,0) rectangle ++ (3,3);
		\draw[color = cyan,thick] (1,0.5) rectangle ++ (2,2.5);
		\node[align=center,scale=0.8] at (2,-0.7) {$d_\ell^1(e)=0,~ d_\ell^2(e) > p^2$};
	\end{tikzpicture}
\end{center}

For the first two cases (left and centre), the well-behaved border elements shown in blue and the respective domain specified by Assumption \ref{ass:wide-refinements} is shown by blue dashed line. Due to assumption \ref{ass:RefDomSplSupp}, if $d_\ell^i(e) = 0$, the adjacent $2p^i+2$ elements, must be contained in $\Omega_{\ell}$, which are shown in orange.
Then, consider the level $\ell-1$ B-spline with the boundary of its support displayed with bold, teal lines.
Then, the level-$\ell$ elements in its support and fenced by the light blue border must be contained in $\Omega_\ell$ due to Assumption \ref{ass:SplSuppSimpleConnected}, and therefore Assumption \ref{ass:wide-refinements} is satisfied.
For the last case (right), assume without loss of generality that $d_\ell^2(e)>p^2$.
A similar reasoning as above implies that Assumption \ref{ass:SplSuppSimpleConnected} is again satisfied.

\vspace{\baselineskip}
\noindent
\textbf{Assumption \ref{ass:unique-projection-elements}}:\\
We now show that Assumptions \ref{ass:RefDomSplSupp}, \ref{ass:SplSuppSimpleConnected} and \ref{ass:projection-elements-no-overlapping} imply Assumptions \ref{ass:unique-projection-elements} in two dimensions for the quadratic and cubic cases.

Given an element $\Omega^{e'}$ and let $\vec{t}$ be the minimal translation in $l^1$ norm to element of $ \mathcal{M}_\ell\backslash\mathcal{M}_\ell^a$. Without loss of generality, assume $t^i\leq 0$ for all $i$. If both entries $t^i$ are non-zero, then for $\Omega^e = \tau_{\vec{t}+[1,1]^T}(\Omega^{e'})$, we have:
\begin{itemize}
 \item $\partial \Omega^e \cap \left(\partial \Omega_\ell \backslash \partial \Omega \right)\neq \emptyset$
 \item $d_\ell^i(e)>0$ for all $i=1,2$.
\end{itemize}
But then, due to Assumption \ref{ass:projection-elements-no-overlapping}, for both the cubic and quadratic case:
\begin{eqnarray}
 d_\ell^i(e) >p^i,\qquad i = 1,2.
\end{eqnarray}
Hence $\Omega^{e'}$ is contained in the projection element generated by well-behaved border element $\Omega^e$.
In case both entries $t^i$ are zero, $d_\ell^i(e')=0$ and $\Omega^{e'}$ is a well-behaved border element and contained in the projection element $\widehat{\Omega}^{e'}$.
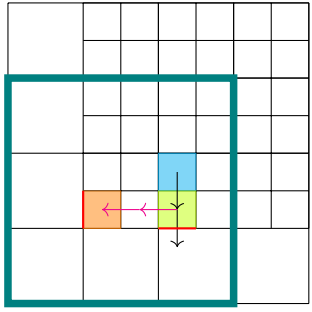
\begin{figure}[t]
 \centering
 \begin{tikzpicture}
      \draw (0,0) grid (4,4);
      \draw[step=5mm] (1,1) grid (4,4);
      \filldraw[color=cyan, opacity = 0.5] (2,1.5) rectangle++ (0.5,0.5);
      \filldraw[color=lime, opacity = 0.5] (2,1) rectangle++ (0.5,0.5);
      \filldraw[color=orange, opacity = 0.5] (1,1) rectangle++ (0.5,0.5);
      \draw[->] (2.25,1.75) -- ++(0,-0.5);
      \draw[->] (2.25,1.25) -- ++(0,-0.5);
      \draw[color = red, thick] (2,1) --++ (0.5,0);
      \draw[->,color=magenta] (2.25,1.25) -- ++(-0.5,0);
      \draw[->,color=magenta] (1.75,1.25) -- ++(-0.5,0);
      \draw[color = red, thick] (1,1) --++ (0,0.5);
      \draw[color = teal, line width = 1mm] (0,0) rectangle ++ (3,3);
  \end{tikzpicture}
 \caption{For the starting element $\Omega^{e'}$ shown in blue, the candidate shown in green does not satisfy $d_\ell^1(e^*)>3=p^1$. Hence, the orange element is the true candidate $\Omega^e$ for which $d_\ell^1(e) = d_\ell^2(e) = 0$. If this were false, the teal B-spline would violate Assumption \ref{ass:SplSuppSimpleConnected}.}
 \label{fig:example4}
\end{figure}
For the last case, assume without loss of generality that $t^1 = 0$. Then the element at translation $\vec{t}_1 = (0,t^2+1)$ is the first candidate. Denote this element as $\Omega^{e^*} = \tau_{\vec{t}_1}(\Omega^{e'})$. If $d_\ell^1(e^*)>p^1$ or $d_\ell^1(e^*)=0$, we are done. Else, the element that $d_\ell^1(e^*)$ points at, denoted by $\Omega^e$, will satisfy $d_\ell^1(e) = 0$ and $d_\ell^2(e) = 0$. Else, one can easily construct a level $(\ell-1)$ B-spline, that violates Assumption \ref{ass:SplSuppSimpleConnected}. The construction of this B-spline is made explicit in Figure \ref{fig:example4}.

Assumption \ref{ass:wide-refinements} is sufficient to show uniqueness for the cases where either $d_\ell^1(e) = 0,~d_\ell^2(e)>p^2$ or $d_\ell^2(e) = 0,~d_\ell^1(e)>p^1$. 
For the remaining cases $d_\ell^1(e)= d_\ell^2(e) = 0$ and $d_\ell^1(e) >p^1,~d_\ell^2(e)>p^2$, note that for two projection elements of the same character (where we mean that both satisfy $d_\ell^1(e)= d_\ell^2(e) = 0$ or $d_\ell^1(e) >p^1,~d_\ell^2(e)>p^2$), they can never overlap by Assumption \ref{ass:RefDomSplSupp}. 
For the mixed case, one of the projection elements will have $d_\ell^{i}(e) = p^i$, and hence be at a minimum $p^i-1$ distance away from the other. This guarantees uniqueness.
\end{document}